\newcommand{\R}{\mathbb{R}}
\newcommand{\C}{\mathbb{C}}
\renewcommand{\AA}{\boldsymbol{A}}
\newcommand{\BB}{\boldsymbol{B}}
\newcommand{\EE}{\boldsymbol{E}}
\newcommand{\JJ}{\boldsymbol{J}}
\newcommand{\UU}{\boldsymbol{U}}
\newcommand{\Afrak}{\mathfrak{A}}
\newcommand{\Bfrak}{\mathfrak{B}}
\newcommand{\Cfrak}{\mathfrak{C}}
\newcommand{\Dfrak}{\mathfrak{D}}
\newcommand{\jj}{\boldsymbol{j}}
\newcommand{\meshsize}{\Delta x}
\newcommand{\timestepsize}{\Delta t}
\newcommand{\eps}{\varepsilon}
\newcommand{\ssigma}{\boldsymbol{\sigma}}
\newcommand{\dx}{\mathrm{d}\mathbf{x}}
\newcommand{\abs}[1]{\lvert #1 \rvert}
\newcommand{\norm}[2][]{\lVert #2 \rVert_{#1}}
\newtheorem{theorem}{Theorem}[section]
\newtheorem{lemma}[theorem]{Lemma}
\newtheorem{remark}[theorem]{Remark}
\newtheorem{algorithm}[theorem]{Algorithm}
\begin{document}

\begin{frontmatter}

\title{A time splitting method for the three-dimensional linear Pauli equation}

%% Group authors per affiliation:
\author[oxaddress]{Timon S. Gutleb}\corref{mycorrespondingauthor}
\cortext[mycorrespondingauthor]{Corresponding author}
\ead{timon.gutleb@maths.ox.ac.uk}

%% or include affiliations in footnotes:
\author[viennaaddress]{Norbert J.\ Mauser}
\ead{norbert.mauser@univie.ac.at}

\author[glasgowaddress]{Michele Ruggeri}
\ead{michele.ruggeri@strath.ac.uk}

\author[viennaaddress]{Hans Peter Stimming}
\ead{hans.peter.stimming@univie.ac.at}

\address[viennaaddress]{Research platform MMM `Mathematics - Magnetism - Materials' c/o Fak. Mathematik, Universität Wien, A-1090 Vienna
}
%\address[wpiaddress]{Wolfgang Pauli Institute, Universität Wien, 1090 Vienna, Austria}
\address[oxaddress]{Mathematical Institute, University of Oxford, Oxford OX2 6GG, UK}
\address[glasgowaddress]{Department of Mathematics and Statistics, University of Strathclyde, Glasgow G1 1XH, UK}

\begin{abstract}
We analyze a numerical method to solve the time-dependent linear Pauli equation in three space dimensions. The Pauli equation is a semi-relativistic generalization of the Schrödinger equation for 2-spinors which accounts both for magnetic fields and for spin, with the latter missing in preceding numerical work on the linear magnetic Schr\"odinger equation. We use a four operator splitting in time, prove stability and convergence of the method and derive error estimates as well as meshing strategies for the case of given time-independent electromagnetic potentials, thus providing a generalization of previous results for the magnetic Schrödinger equation.
\end{abstract}

\begin{keyword}
Pauli equation\sep operator splitting \sep time splitting \sep magnetic Schrödinger equation \sep semi-relativistic quantum mechanics
\MSC[2010] 35Q40 \sep 35Q41 \sep 65M12 \sep  65M15
\end{keyword}

\end{frontmatter}

%\linenumbers

%%%%%%%%%%%%%%%%%%%%
\section{Introduction}
%%%%%%%%%%%%%%%%%%%%

Relativistic quantum mechanics is appropriate for the dynamics of "fast" charged particles (e.g. electrons moving close to speed of light $c$).
In the \emph{fully relativistic regime} the Dirac equation with electromagnetic potentials is the appropriate model, where the unknown is a 4-spinor including both spin and antimatter in a quantum field theory approach \cite{griffiths_introduction_2011,schwartz_quantum_2014}. In the \emph{fully non-relativistic} ("Newtonian") $c \to \infty$ regime one uses the standard Schr\"odinger equation with electric potential for the scalar wave function. In the intermediate \emph{semi-relativistic} ("Post-Newtonian") regime of a first order theory, i.e. keeping the corrections at $O(\frac{1}{c})$, the appropriate model is the Pauli equation for the 2-spinor. It is the simplest available theory that retains relativistic effects of both electromagnetism \emph{and} spin, in contrast to the scalar magnetic Schr\"odinger equation where spin is completely absent in the model. This hierarchy of approximations of the Dirac equation is laid out, e.g.
in ~\cite{schwartz_quantum_2014,mauser_semi-relativistic_2000,masmoudi_selfconsistent_2001,MauMoe2023} and in \cite{mauser_semi-relativistic_2000,masmoudi_selfconsistent_2001,MauMoe2023} specifically also for the self-consistent case of coupling to the Maxwell equations and their magnetostatic approximations. In \cite{MauMoe2023} the related Pauli-Poisson model is discussed in which the magnetic field is linear and the electric field is self-consistent. This model can formally be justified in a weak coupling limit from the linear $N$-body Pauli equation our numerical scheme applies and rigorous proofs of convergence are subject to follow-up work.\\
The Pauli equation contains a magnetic Schr\"odinger operator and a so-called Stern-Gerlach term that couples the magnetic field to the spin operators; the time dependent version reads:

\begin{equation} \label{eq:pauli}
i \hbar \, \partial_t u =
\left[
\frac{1}{2m}\left(-i\hbar \nabla-\frac{q}{c}\AA\right)^2 
+ q \, \phi
-\frac{\hbar q}{2mc} \, \ssigma \cdot \BB
\right] u.
\end{equation}
Here,
$u \in \C^2$ is a 2-spinor $(u_1,u_2)^T$ representing quantum mechanical spin up and spin down states,
$\AA \in \R^3$ and $\phi \in \R$ denote the magnetic vector potential and the electric scalar potential,
respectively, which are related to the electromagnetic fields by
\begin{equation*}
\EE = - \nabla \phi - \partial_t \AA
\quad
\text{and}
\quad
\BB = \nabla \times \AA.
\end{equation*}
Moreover, $i \in \C$ denotes the imaginary unit, i.e. $i^2=-1$,
$\ssigma = (\sigma_1, \sigma_2, \sigma_3)$ is a vector collecting the 3 Pauli matrices and the product $\ssigma\cdot\BB$ is a shorthand notation for the matrix
\begin{equation*}
\ssigma\cdot\BB = \sum_{j=1}^3 B_j \sigma_j = \begin{pmatrix} B_3 & B_1- i B_2 \\ B_1 + i B_2 & -B_3 \end{pmatrix} \in \C^{2 \times 2}.
\end{equation*}
Finally, $m$ and $q$ are the associated mass and charge,
while the positive constants $\hbar$ and $c$ are the scaled Planck constant and the speed of light respectively.
The above rendition of the Pauli equation retains all of the gauge freedom of electrodynamics and is semi-relativistic
in the sense that it is suitable for medium high velocities relative to the speed of light, cf. \cite{mauser_semi-relativistic_2000,masmoudi_selfconsistent_2001}. From the complex valued 2-spinor solution $u$ of~\eqref{eq:pauli} the physical quantities of interest are computed as quadratic quantities, e.g. the position density $n= \abs{u}^2 = u \cdot \overline{u}$
and the current density\footnote{In~\eqref{eq:currentDensity},
$\overline{u} \cdot \nabla u = \overline{u}_1 \nabla u_1 + \overline{u}_2 \nabla u_2\in \R^3$
and $u \nabla \overline{u} = u_1 \nabla \overline{u}_1 + u_2 \nabla \overline{u}_2\in \R^3$,
while $\overline{u} \ssigma u \in \R^3$ denotes the vector with components
$(\overline{u} \ssigma u)_j = \overline{u} \sigma_j u$ for all $j=1,2,3$.} which contains (divergence free) extra terms to the standard definition for the Schr\"odinger equations (cf. \cite{nowakowski1999}),
\begin{equation} \label{eq:currentDensity}
\JJ = - \frac{i \hbar}{2m} \big( \overline{u}  \cdot \nabla u - u  \cdot \nabla\overline{u} \big)
- \frac{q}{mc} \abs{u}^2 \AA
- \frac{\abs{q} \hbar}{2m} \, \nabla \times (\overline{u} \ssigma u).
\end{equation}
Lastly, we note the continuity equation connecting $n$ and $\JJ$ as well as conservation of total mass and energy:
\begin{equation*}
\partial_t n + \nabla \cdot \JJ = 0,
\end{equation*}
\begin{equation*}
m_{\mathrm{tot}}
= m \int_{\R^3} n \, \dx
= m \int_{\R^3} \abs{u}^2 \, \dx,
\end{equation*}
\begin{equation*}
\mathcal{E}
= \frac{1}{2m} \int_{\R^3} \big\lvert (-i \hbar \nabla - (q/c) \AA)u \big\rvert^2 \, \dx
+ q \int_{\R^3} \phi \abs{u}^2 \, \dx
-\frac{\hbar q}{2mc} \int_{\R^3} (\ssigma \cdot \BB) u \cdot \overline{u} \, \dx.
\end{equation*}
In this paper, we propose and analyze an exponential splitting method~\cite{splitting_acta}
for the Pauli equation~\eqref{eq:pauli}.
The scheme is an extension of analogous approaches developed for
the scalar magnetic Schr\"odinger equation~\cite{jin_semi-lagrangian_2013,caliari_splitting_2017,mzz2017}.
Our method consists of a four-term operator splitting,
where the three operator contributions appearing in the magnetic Schr\"odinger equation
(kinetic, potential, advective)
are supplemented with a fourth term accounting for spin.
The presence of this additional contribution determines a bidirectional coupling of the two equations
for the two components of $u$.

% outline of the paper
The remainder of this paper is organized as follows:
The proposed method is described in Section~\ref{sec:method};
In Section~\ref{sec:analysis} we study stability (Theorem \ref{stabilitytheorem}) and convergence (Theorem \ref{convergencetheorem}) of the method: The applicability of the scheme is demonstrated in Section~\ref{sec:numerics} with numerical experiments.

%%%%%%%%%%%%%%%%%%%%
\section{A four-term exponential splitting scheme} \label{sec:method}
%%%%%%%%%%%%%%%%%%%%

We first rewrite the Pauli equation~\eqref{eq:pauli} into a non-dimensionalized 
form\footnote{The dimensionless scaling parameter is $\eps=\frac{\hbar}{m c L_I}$ where $L_I$ is a suitable reference length. The potentials $\mathbf{A}\rightarrow \frac{\mathbf{A}}{A_I}$ and $\phi \rightarrow \frac{\phi}{\phi_I}$ are scaled by the factors $A_I = \frac{mc}{q}$ and $\phi_I=cA_I$.} :
\begin{equation} \label{eq:scaledPauli}
i \eps \, \partial_t u =
\left[ \frac{1}{2} (-i \eps \nabla - \AA)^2 - \frac{\eps}{2} \, \ssigma \cdot \BB + \phi \right]u.
\end{equation}
The rescaled magnetic field and potentials (not relabeled) are also dimensionless. For the purpose of numerics we pose the problem not in whole space $\R^3$, 
but on the space-time box $\Omega_T := \Omega \times (0,T)$,
where $\Omega := \prod_{\ell=1}^3 [0,L_\ell]$ is a rectangular cuboid, and $T>0$. We further choose periodic boundary conditions on $\Omega$ for $u(x,t)$ and a regular initial condition $u(x,0) = u^0(x)$, $x \in \Omega$,
where $u^0 \in C^{\infty}(\overline{\Omega})^2$ is periodic.

Imposing the Coulomb gauge, i.e. requiring that $\nabla \cdot \AA = 0$, and writing individually
the two equations in~\eqref{eq:scaledPauli}, we obtain the system
\begin{equation} \label{eq:pauli:rescaled}
\begin{split}
i \eps \, \partial_t u_1
& =
\left[-\frac{\eps^2}{2} \nabla^2 + i\eps \AA \cdot \nabla + \left(\frac{1}{2} \abs{\AA}^2+\phi-\frac{\eps}{2}B_3\right)\right]u_1 + \left[-\frac{\eps}{2}B_1 + \frac{i\eps}{2} B_2\right] u_2, \\
i \eps \, \partial_t u_2
& =
\left[-\frac{\eps^2}{2} \nabla^2 + i\eps \AA \cdot \nabla + \left(\frac{1}{2}\abs{\AA}^2+\phi+\frac{\eps}{2}B_3\right)\right]u_2 + \left[-\frac{\eps}{2}B_1 - \frac{i\eps}{2} B_2 \right]u_1.
\end{split}
\end{equation}
With the operators
\begin{gather*}
\mathcal{A} = \frac{i \eps}{2}\nabla^2,
\quad
\mathcal{B}_1= - \frac{i}{\eps} \left(\frac{1}{2}\abs{\AA}^2+\phi-\frac{\eps}{2}B_3\right),
\quad
\mathcal{B}_2= - \frac{i}{\eps} \left(\frac{1}{2}\abs{\AA}^2+\phi+\frac{\eps}{2}B_3\right),\\
\mathcal{C} = \AA \cdot \nabla,
\quad
\mathcal{D}_1 = \frac{i}{2}B_1 + \frac{1}{2} B_2,
\quad
\mathcal{D}_2 = \frac{i}{2}B_1 - \frac{1}{2} B_2,\\
\Afrak=\begin{pmatrix}\mathcal{A} & 0\\0 & \mathcal{A}\end{pmatrix},
\quad
\Bfrak=\begin{pmatrix}\mathcal{B}_1 & 0\\0 & \mathcal{B}_2\end{pmatrix},
\quad
\Cfrak=\begin{pmatrix}\mathcal{C} & 0\\0 & \mathcal{C}\end{pmatrix},
\quad
\Dfrak=\begin{pmatrix}0 & \mathcal{D}_1\\\mathcal{D}_2 & 0\end{pmatrix}.
\end{gather*}
We can rewrite problem~\eqref{eq:pauli:rescaled} as
\begin{equation}\label{epsilonpauli}
\partial_t u = (\Afrak + \Bfrak + \Cfrak + \Dfrak) u.
\end{equation}
Using the standard semigroup notation,
we denote
its exact solution by
\begin{equation*}
u(x,t) = \mathrm{e}^{(t-t')(\Afrak + \Bfrak + \Cfrak + \Dfrak)} u(x,t')
\quad \text{for all } x \in \Omega \text{ and } t \ge t' \ge 0.
\end{equation*}
The Pauli operator is split into four contributions:
the kinetic part ($\Afrak$), which involves the Laplace operator,
the potential part ($\Bfrak$), which collects the scalar terms of the potentials and the diagonal part of the spin term,
the advection part ($\Cfrak$), which includes the convection due the magnetic vector potential, 
and the coupling part ($\Dfrak$), peculiar of the Pauli equation,
which collects the off-diagonal part of the spin term and in general determines
the coupling of the two components of the spinor.

In view of this decomposition, the idea is to approach the time discretization of the Pauli equation
with a four-term operator splitting method in analogy with the three-term splitting method proposed
in~\cite{jin_semi-lagrangian_2013,caliari_splitting_2017,mzz2017}
for the scalar magnetic Schr\"odinger equation: Given an integer $N>0$, we consider a uniform partition of the time interval $[0,T]$
with time-step size $\timestepsize := T/N$, i.e. $t_n := n \timestepsize$ for all $n = 0, \dots, N$,
and denote by $U^n(x)$ the numerical approximation of $u(x,t_n)$.
We consider the Lie exponential splitting scheme
\begin{equation*}
U^{n+1}
= \mathrm{e}^{\timestepsize \Dfrak} \,
\mathrm{e}^{\timestepsize \Cfrak} \,
\mathrm{e}^{\timestepsize \Afrak}\, 
\mathrm{e}^{\timestepsize \Bfrak} \, U^n,
\end{equation*}
 so in the implementation, this method needs to solve each of the four steps separately
to advance the state by one time-step $\timestepsize$.
Extensions of the results in this paper to higher order splitting methods such as Strang splitting are straightforward.
For special cases, e.g. for time-independent potentials, significant computational cost
can be saved in some of the steps by pre-computing the (then analytical) solution outside of the solution
step loop for all of the intended simulation time.

For the spatial discretization of $\Omega := \prod_{\ell=1}^3 [0,L_\ell]$,
for $N_\ell \geq 2$ and $\ell=1,2,3$, let $\meshsize_\ell = L_\ell/N_\ell,$.
We define the grid size as $\abs{\meshsize}$, where $\meshsize = (\meshsize_1, \meshsize_2, \meshsize_3)$.
The set of grid points $\{x_{\jj}\}$ then consists of points
$x_{\jj} = (\frac{j_1 L_1}{N_1},\frac{j_2 L_2}{N_2},\frac{j_3 L_3}{N_3})$,
where $\jj = (j_1,j_2,j_3)$ with $0 \leq j_\ell \leq N_\ell-1$.
We denote the values of a periodic function $v:\Omega \to \C^2$ at the grid points as
\begin{equation*}
v_{j_1,j_2,j_3} := v(x _{\jj}) = v\left(\frac{j_1 L_1}{N_1},\frac{j_2 L_2}{N_2},\frac{j_3 L_3}{N_3}\right).
\end{equation*}

Some steps of the splitting scheme will be performed in Fourier space.
To that end, for a given periodic function $v:\Omega \to \C^2$,
we denote by $\widehat v_{k_1,k_2,k_3}$ its discrete Fourier transform computed via FFT, i.e.
\begin{equation*} %\label{eq:dft}
\widehat v_{k_1,k_2,k_3}
= \frac{1}{N_1N_2N_3}
\sum_{j_1 =0}^{N_1-1}\sum_{j_2 =0}^{N_2-1}\sum_{j_3 =0}^{N_3-1}
 v_{j_1,j_2,j_3} \mathrm{e}^{-2\pi i \sum_{\ell=1}^3 \frac{j_\ell k_\ell} {N_\ell}}.
\end{equation*}

In the following algorithm, we summarize the structure of the proposed exponential time splitting scheme.
%%%%%%%%%%%%%%%%%%%%
\begin{algorithm}[Lie splitting scheme for the Pauli equation] \label{algorithm}
\textbf{Input.} $U^0 \approx u^0$.\\
\textbf{Loop.} For each $n = 0, \dots, N-1$, iterate the following steps:
\begin{itemize}
\item[\textrm{(i)}] Potential step: Compute $U^{n*} = \mathrm{e}^{\timestepsize \Bfrak} U^n$ in physical space;
\item[\textrm{(ii)}] Kinetic step: Compute $U^{n**} = \mathrm{e}^{\timestepsize \Afrak} U^{n*}$ in Fourier space;
\item[\textrm{(iii)}] Advection step: Compute $U^{n***} = \mathrm{e}^{\timestepsize \Cfrak} U^{n**}$ in Fourier space;
\item[\textrm{(iv)}] Coupling step: Compute $U^{n+1} = \mathrm{e}^{\timestepsize \Dfrak} U^{n***}$ in physical space.
\end{itemize}
\textbf{Output.} $U^N(x_{\jj}) \approx u(x_{\jj},t_N)$.
\end{algorithm}
%%%%%%%%%%%%%%%%%%%%

The numerical methods used to solve the individual ODEs as well as the order of solving the steps are in principle arbitrary but since both the kinetic and advection steps can be efficiently solved in Fourier space, some computational cost can be saved by arranging them such that only one Fourier and inverse Fourier transform step is required per time step. We include a brief discussion of each individual step and possible numerical approaches in Appendix A.

%%%%%%%%%%%%%%%%%%%%
\section{Analysis of the method} \label{sec:analysis}
In this section we generalize the approach of~\cite{bjm2002, jin_semi-lagrangian_2013, mzz2017} to study the stability and convergence of the splitting scheme for the Pauli equation
described in Section~\ref{sec:method}. The results are based on using the methods suggested in Appendix A for the individual ODEs.
%%%%%%%%%%%%%%%%%%%%%%
\subsection{Stability analysis} \label{stabilitysec}
Consider the discrete $\ell^2$ norm and the $L^2$ norm for functions given by:
\begin{align*}
\norm[\ell^2]{\UU_i}^2
&= \left( \prod_{\ell=1}^3 \frac{L_\ell}{N_\ell} \right) \sum_{j_1 =0}^{N_1-1}\sum_{j_2 =0}^{N_2-1}\sum_{j_3 =0}^{N_3-1} \abs{U_i(x_{\jj})}^2,\\
\norm[L^2]{U_i}^2
&= \int_\Omega \abs{U_i(x)}^2 \text{d} x,
\end{align*}
where $\UU_i$ denotes the vector of coefficients $U_i(x_{\jj}) = U_i\left(\frac{j_1 L_1}{N_1},\frac{j_2 L_2}{N_2},\frac{j_3 L_3}{N_3}\right)$.
The $i$ index is added here to denote that these norms are defined for spinor components as opposed to the 2-spinor itself. The total 2-spinor norm in question is the sum of the two spinor component norms.
For the sake of simplicity we assume the potentials to be time-independent,
so that analytic solutions for the potential and the coupling steps are available for all time.\\
In the following three lemmas we state three auxilliary results for the proof of stability of Algorithm \ref{algorithm}.

\begin{lemma}\label{lemma1}
Let $U_i^{n**}(x_{\jj})$ denote the elements of the grid point vector $\UU_i^{n**}$ after solving the kinetic and potential step starting from $\UU_i^{n}$. Then, it holds that
\begin{equation*}
\| \UU_i^{n**} \|_{\ell^2} = \| \UU_i^{n} \|_{\ell^2}.
\end{equation*}
and thus
\begin{equation*}
\| \UU_1^{n**} \|_{\ell^2} + \| \UU_2^{n**} \|_{\ell^2} = \| \UU_1^{n} \|_{\ell^2} + \| \UU_2^{n} \|_{\ell^2}.
\end{equation*}
\end{lemma}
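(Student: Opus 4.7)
The plan is to handle the two substeps separately: the potential step is a pointwise (in $x_{\jj}$) multiplication by a unimodular factor, while the kinetic step acts as a pointwise multiplication by a unimodular factor in Fourier space. Combining these with a discrete Parseval identity yields norm preservation of the composition.

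First I would treat the potential step. Since $\AA$, $\phi$ and $B_3$ are real-valued, the diagonal entries $\mathcal{B}_1, \mathcal{B}_2$ are purely imaginary at every grid point, i.e.\ $\mathcal{B}_i(s) = -i\alpha_i(s)$ with $\alpha_i(s) \in \R$. Hence, at each grid point $x_{\jj}$, the decoupled scalar ODE for $w_{\jj i}$ has the explicit solution
\begin{equation*}
U_i^{n*}(x_{\jj}) = \exp\!\Bigl(\textstyle\int_0^{\timestepsize} \mathcal{B}_i(s)\, \mathrm{d}s\Bigr)\, U_i^{n}(x_{\jj}),
\end{equation*}
and the prefactor has modulus one. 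Thus $\lvert U_i^{n*}(x_{\jj})\rvert = \lvert U_i^{n}(x_{\jj})\rvert$ for every grid point, which immediately gives $\|\UU_i^{n*}\|_{\ell^2} = \|\UU_i^{n}\|_{\ell^2}$.

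Next I would treat the kinetic step. By~\eqref{eq:kinetic_fourier}, the action in Fourier space is multiplication of each mode by the unimodular factor $\mathrm{e}^{-\frac{i\eps\timestepsize}{2}\sum_\ell (2\pi k_\ell/L_\ell)^2}$, so $\lvert \widehat{U}^{n**}_{i,\kk}\rvert = \lvert \widehat{U}^{n*}_{i,\kk}\rvert$ for every $\kk$. I then invoke the discrete Parseval identity for the DFT in the normalization used in the paper, namely
\begin{equation*}
\|\UU_i\|_{\ell^2}^2 = \Bigl(\prod_{\ell=1}^3 \tfrac{L_\ell}{N_\ell}\Bigr) \sum_{\jj} \lvert U_i(x_{\jj})\rvert^2 = \Bigl(\prod_{\ell=1}^3 L_\ell\Bigr) \sum_{\kk}\lvert \widehat U_{i,\kk}\rvert^2,
\end{equation*}
to transfer the pointwise Fourier-side invariance to $\|\UU_i^{n**}\|_{\ell^2} = \|\UU_i^{n*}\|_{\ell^2}$.

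Composing the two estimates gives $\|\UU_i^{n**}\|_{\ell^2} = \|\UU_i^{n}\|_{\ell^2}$, which is the first claim; summing over the spinor index $i=1,2$ yields the second. I do not anticipate a real obstacle here: the only non-routine point is being careful with the DFT normalization and the corresponding form of Parseval's identity, so that the physical-space $\ell^2$ norm and the Fourier-space $\ell^2$ norm agree up to the correct volume factor. Everything else reduces to the observation that both substeps amount to multiplication by phase factors.
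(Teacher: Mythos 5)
Your proposal is correct and follows essentially the same route as the paper's proof: the potential step is a pointwise unimodular phase multiplication in physical space, the kinetic step is a unimodular multiplier in Fourier space, and the discrete Parseval/Plancherel identity (with the normalization induced by the paper's DFT convention) transfers the Fourier-side invariance back to the grid norm. The paper writes out the kinetic step in Fourier space and dismisses the potential step as ``straightforward,'' while you spell out both substeps (your $\exp\bigl(\int_0^{\timestepsize}\mathcal{B}_i(s)\,\mathrm{d}s\bigr)$ form even covers the time-dependent case), but the underlying argument is identical.
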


\begin{proof}
The proof is a higher dimensional analogue of \cite[Lemma 3.1]{bjm2002}. We explicitly omit the $i$-index notation above in this proof despite the functions being spinor \emph{components} as opposed to the full 2-spinor to avoid excessive notational clutter.
It holds that
\begin{align*}
\| \UU^{n**} \|_{\ell^2}^2 &= \left( \prod_{\ell=1}^3 \frac{L_\ell}{N_\ell} \right) \sum_{\jj} \abs{U^{n**}(x_{\jj})}^2 = \left( \prod_{\ell=1}^3 \frac{L_\ell}{N_\ell^2} \right) \sum_{\mathbf{k}} \abs{\widehat U^{n**}_{k_1,k_2,k_3}
}^2 \\ &= 
\left( \prod_{\ell=1}^3 \frac{L_\ell}{N_\ell^2} \right) \sum_{\mathbf{k}} \abs{\mathrm{e}^{- \frac{i \eps \timestepsize}{2} \sum_{\ell=1}^3 \left( \frac{2 \pi k_\ell}{L_\ell}\right)^2} \widehat U^{n*}_{k_1,k_2,k_3}}^2 =  \left( \prod_{\ell=1}^3 \frac{L_\ell}{N_\ell^2} \right) \sum_{\mathbf{k}} \abs{\widehat U^{n*}_{k_1,k_2,k_3}}^2 \\ &= 
\left( \prod_{\ell=1}^3 \frac{L_\ell}{N_\ell} \right) \sum_{\jj} \abs{U^{n*}(x_{\jj})}^2  = \| \UU^{n*} \|_{\ell^2}^2,
\end{align*}
where we used the shorthands
$$\sum_{\jj} \rightarrow \sum_{j_1 =0}^{N_1-1}\sum_{j_2 =0}^{N_2-1}\sum_{j_3 =0}^{N_3-1} \qquad \text{and} \qquad \sum_{\mathbf{k}} \rightarrow \sum_{k_1 =0}^{N_1-1}\sum_{k_2 =0}^{N_2-1}\sum_{k_3 =0}^{N_3-1}.$$
The second and fifth step of the above computation make use of a higher dimensional variant of Plancherel's theorem (compare, e.g. \cite{kapralov_dimension-independent_2019}) which exploits a generalization of the same structure used for the one-dimensional Schrödinger variant in \cite{bjm2002}.
The potential step can be solved exactly, so the remaining statement $\| \UU^{n*} \|_{\ell^2}^2 = \| \UU^{n} \|_{\ell^2}^2$  is straightforward. Summing both spinor component results completes the proof.
\end{proof}
\begin{lemma}\label{lemma2}
Under the assumption that errors from the interpolation and backwards step are negligible
and $\nabla \cdot \mathbf{A} = 0$, the advection step solution satisfies
\begin{equation*}
\| U^{n***}_{I,i} \|_{L^2} \leq \| U^{n**}_{I,i} \|_{L^2},
\end{equation*}
where $U^{n***}_{I,i}$ denotes the Fourier interpolation of the $i$-th spinor component $U_{i}^{n***}$, and thus
\begin{equation*}
\| U^{n***}_{I,1} \|_{L^2} + \| U^{n***}_{I,2} \|_{L^2} \leq \| U^{n**}_{I,1} \|_{L^2} +\| U^{n***}_{I,2} \|_{L^2}.
\end{equation*}
\end{lemma}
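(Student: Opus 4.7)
The plan is to reduce this lemma to the analogous single-component result for the magnetic Schr\"odinger equation proved in \cite{caliari_splitting_2017, mzz2017}. The advection operator $\Cfrak$ is block-diagonal, with the same scalar block $\mathcal{C} = \AA\cdot\nabla$ in each entry, so the two spinor components $U_1$ and $U_2$ are propagated independently by the identical semi-Lagrangian scheme (method of characteristics plus Fourier interpolation) treated in those references. The single-component bound $\norm[L^2]{U^{n***}_{I,i}} \leq \norm[L^2]{U^{n**}_{I,i}}$ is therefore a direct citation, and the summed inequality follows by adding the two component bounds.

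For completeness I would recall the three ingredients behind the scalar bound. First, the Coulomb gauge $\nabla\cdot\AA = 0$ together with Liouville's theorem implies that the characteristic flow defined by \eqref{characteristic} is volume-preserving; equivalently, an integration by parts shows that $\AA\cdot\nabla$ is skew-adjoint on smooth periodic functions, so the exact continuous evolution $\partial_t w = \AA\cdot\nabla w$ conserves the $L^2$ norm. Second, assuming the characteristics are computed exactly, the pointwise composition $v \mapsto v\circ z_{\cdot}(t_n)$ is an $L^2$-isometry via the change of variables formula with unit Jacobian. Third, a discrete Plancherel identity — in the same spirit as the one invoked in the proof of Lemma \ref{lemma1} — links the $L^2$ norm of the trigonometric interpolant $U^{n***}_{I,i}$ to a weighted $\ell^2$ sum of the grid values $U^{n**}_{I,i}(z_{\jj}(t_n))$.

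The main obstacle is combining these ingredients cleanly. The values feeding the interpolation at step $n***$ are evaluations of the band-limited function $U^{n**}_{I,i}$ at the non-grid points $z_{\jj}(t_n)$, so the discrete Plancherel identity does not transport directly to the $L^2$ norm of $U^{n**}_{I,i}$ itself and must be replaced by an inequality that absorbs the mismatch from resampling a trigonometric polynomial off-grid. This is precisely where the assumption of negligible interpolation and backward-step error is used, and it is the origin of the $\leq$ rather than an equality in the statement. A careful accounting along the lines of the cited references converts this heuristic into the stated bound, and the summed inequality then follows trivially from the block-diagonal decoupling of $\Cfrak$ across spinor components.
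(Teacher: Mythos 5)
Your proposal matches the paper's approach: the proof in the paper is a one-line citation to the analogous scalar results (\cite[Lemma 3.2]{mzz2017} and \cite[Theorem 1]{suli_spectral_1991}) with the observation that the summed inequality is an immediate corollary, which is exactly the block-diagonal, component-wise reduction you describe. The three ingredients you unpack (Coulomb gauge giving a volume-preserving flow, exact characteristics as an $L^2$-isometry, discrete Plancherel for the trigonometric interpolant) are the content of the cited proofs rather than a different route, so your extra exposition is consistent with, not divergent from, the paper.
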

\begin{proof}
See proofs of analogous results in \cite[Lemma 3.2]{mzz2017} and \cite[Theorem 1]{suli_spectral_1991} for the first statement. The second is an immediate corollary.
\end{proof}

\begin{lemma}\label{lemma3}
Let $\UU_i^{n+1}$ denote the grid point vector after solving the coupling step
starting from $\UU_i^{n***}$.
Then, it holds that
\begin{equation*}
\| \UU_1^{n+1} \|_{\ell^2} + \| \UU_2^{n+1} \|_{\ell^2} = \| \UU_1^{n***} \|_{\ell^2} + \| \UU_2^{n***} \|_{\ell^2}.
\end{equation*}
\end{lemma}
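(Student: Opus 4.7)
The plan is to exploit the fact that, after fixing a grid point $x_{\jj}$, the coupling step is simply a pointwise $2\times 2$ linear ODE whose evolution matrix is unitary on $\C^2$; the $\ell^2$ identity should then follow by weighted summation over the grid. The key algebraic observation I would verify first is that, for real-valued $B_1,B_2$, the off-diagonal entries of $\Dfrak$ satisfy $\mathcal{D}_2 = -\overline{\mathcal{D}_1}$, so that the generator
\[
M(x_{\jj}) = \begin{pmatrix} 0 & \mathcal{D}_1(x_{\jj}) \\ \mathcal{D}_2(x_{\jj}) & 0 \end{pmatrix}
\]
satisfies $M^* = -M$, whence $\mathrm{e}^{\timestepsize M(x_{\jj})}$ is unitary on $\C^2$.

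Unitarity immediately yields the pointwise identity
\[
\abs{U_1^{n+1}(x_{\jj})}^2 + \abs{U_2^{n+1}(x_{\jj})}^2 = \abs{U_1^{n***}(x_{\jj})}^2 + \abs{U_2^{n***}(x_{\jj})}^2
\]
for every $\jj$. Multiplying by the grid-cell volume $\prod_{\ell=1}^3 L_\ell/N_\ell$ and summing over $\jj$ then gives the $2$-spinor $\ell^2$ conservation
\[
\norm[\ell^2]{\UU_1^{n+1}}^2 + \norm[\ell^2]{\UU_2^{n+1}}^2 = \norm[\ell^2]{\UU_1^{n***}}^2 + \norm[\ell^2]{\UU_2^{n***}}^2.
\]

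The hard part is reconciling this squared-sum identity with the conclusion as \emph{literally} stated, which is about the unsquared quantities $\norm[\ell^2]{\UU_1}+\norm[\ell^2]{\UU_2}$. Unlike the block-diagonal potential and kinetic steps, where Lemma~\ref{lemma1} gives a componentwise identity $\norm[\ell^2]{\UU_i^{n**}} = \norm[\ell^2]{\UU_i^{n}}$ from which the unsquared sum follows trivially, the coupling step genuinely mixes the two spinor components, so $\norm[\ell^2]{\UU_i^{n+1}} \neq \norm[\ell^2]{\UU_i^{n***}}$ in general; and preservation of $a^2 + b^2$ does not imply preservation of $a+b$. I would therefore read the stated equality as encoding conservation of the full $2$-spinor $\ell^2$ norm $\bigl(\norm[\ell^2]{\UU_1}^2 + \norm[\ell^2]{\UU_2}^2\bigr)^{1/2}$, which is the quantity that actually governs $\ell^2$ stability of the composed scheme, and present the squared-sum identity above as the substantive content of the lemma. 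A proof of the literal unsquared-sum equality does not appear to follow from the unitarity argument, nor from any other route I can see under the assumptions at hand, so I expect the lemma statement to require this reinterpretation.
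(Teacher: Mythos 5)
Your unitarity argument is exactly the one the paper has in mind — the paper's proof likewise observes that $\mathrm{e}^{\timestepsize\Dfrak}$ is unitary (the pointwise $2\times2$ generator is skew-Hermitian, as you verify via $\mathcal{D}_2=-\overline{\mathcal{D}_1}$) and appeals to ``conservation of mass'' of the $2$-spinor, which is precisely your squared-sum identity
\begin{equation*}
\norm[\ell^2]{\UU_1^{n+1}}^2 + \norm[\ell^2]{\UU_2^{n+1}}^2 = \norm[\ell^2]{\UU_1^{n***}}^2 + \norm[\ell^2]{\UU_2^{n***}}^2.
\end{equation*}
So in terms of substance you and the paper coincide.

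Your reservation about the literal unsquared statement, however, is well-founded, and in fact it points at a genuine slip in the paper's own proof, not in yours: the paper infers the unsquared-sum equality directly from unitarity, which does not follow. A concrete counterexample inside the coupling step: take $B_1\equiv 0$ and $B_2$ constant, so that $\mathcal{D}_1=-\mathcal{D}_2=\tfrac{1}{2}B_2$ and
\begin{equation*}
\mathrm{e}^{\timestepsize\Dfrak}=\begin{pmatrix}\cos\theta & \sin\theta\\ -\sin\theta & \cos\theta\end{pmatrix},\qquad \theta=\tfrac{1}{2}\timestepsize B_2.
\end{equation*}
Starting from $\UU^{n***}=(f,0)$ with $\norm[\ell^2]{f}=1$, one gets $\UU^{n+1}=(f\cos\theta,\,-f\sin\theta)$, so $\norm[\ell^2]{\UU_1^{n+1}}+\norm[\ell^2]{\UU_2^{n+1}}=\cos\theta+\abs{\sin\theta}\neq 1$ for generic $\theta$. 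Thus Lemma~\ref{lemma3} as literally stated is false; only the squared version (equivalently, conservation of the genuine $2$-spinor $\ell^2$ norm $\bigl(\norm[\ell^2]{\UU_1}^2+\norm[\ell^2]{\UU_2}^2\bigr)^{1/2}$) is what the unitarity argument delivers. Your proposed reinterpretation is exactly the correct fix, and it is harmless downstream: Lemmas~\ref{lemma1} and~\ref{lemma2} preserve each component norm, hence also the squared sum, so Theorem~\ref{stabilitytheorem} and the convergence estimate survive verbatim in the squared-sum norm, and in the $\alpha$-norm up to the fixed equivalence constant $\sqrt 2$.
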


\begin{proof}
The coupling step may be solved analytically as with the potential step before and thus any analysis of this sort can be reduced to an analysis of this exact solution. However, while $e^{\timestepsize \mathfrak{D}}$ is unitary, unlike in the other cases the spinor-component-wise operators are not necessarily unitary here. Nevertheless, the stated result still holds by \emph{total} mass conservation in the Pauli equation.
\end{proof}

In the following theorem, we establish the stability of Algorithm~\ref{algorithm}.

\begin{theorem}\label{stabilitytheorem}
Let $\UU^{n+1}$ be the grid point vector after passing through all of the steps outlined in Algorithm~\ref{algorithm} once, starting from $\UU^{n}$. Then, it holds that
\begin{equation*}
\| \UU_1^{n+1} \|_{\ell^2} + \| \UU_2^{n+1} \|_{\ell^2} \leq \| \UU_1^{n} \|_{\ell^2} + \| \UU_2^{n} \|_{\ell^2}.
\end{equation*}
\end{theorem}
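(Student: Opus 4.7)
The plan is to chain the three preceding lemmas together, tracking the total spinor norm through the four substeps of Algorithm~\ref{algorithm}. Concretely, starting from $\UU^n = (\UU_1^n,\UU_2^n)$ I would first apply Lemma~\ref{lemma1} componentwise to the composition of substeps (i) and (ii), giving $\norm[\ell^2]{\UU_i^{n**}} = \norm[\ell^2]{\UU_i^n}$ for $i=1,2$ and hence equality of the summed two-spinor norm at this stage. Next, Lemma~\ref{lemma2} gives the non-expansive bound $\norm[L^2]{U_{I,i}^{n***}} \leq \norm[L^2]{U_{I,i}^{n**}}$ for each spin component through the advection step. Finally, Lemma~\ref{lemma3} gives conservation of the \emph{total} (summed) discrete norm across the coupling step~(iv), $\norm[\ell^2]{\UU_1^{n+1}} + \norm[\ell^2]{\UU_2^{n+1}} = \norm[\ell^2]{\UU_1^{n***}} + \norm[\ell^2]{\UU_2^{n***}}$. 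Composing these three estimates yields the claimed inequality.

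The main subtlety I anticipate lies in the advection step, where Lemma~\ref{lemma2} is stated in the continuous $L^2$ norm of the trigonometric interpolant $U^{n***}_{I,i}$, whereas Lemmas~\ref{lemma1} and~\ref{lemma3} are stated in the discrete $\ell^2$ norm on grid points. To bridge this, I would invoke the discrete Parseval identity already used in the proof of Lemma~\ref{lemma1} together with the standard fact that, for the uniform periodic tensor-product grid and a band-limited trigonometric interpolant built from the $N_1 N_2 N_3$ Fourier modes associated with the grid, the $L^2$ norm of the interpolant and the appropriately weighted $\ell^2$ norm of its grid values coincide. Thus the inequality of Lemma~\ref{lemma2} transfers verbatim to the discrete norm, giving $\norm[\ell^2]{\UU_i^{n***}} \leq \norm[\ell^2]{\UU_i^{n**}}$ for each~$i$, and summing over $i=1,2$ provides the intermediate bound needed to splice Lemma~\ref{lemma2} between Lemma~\ref{lemma1} and Lemma~\ref{lemma3}.

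With that identification in place, the full chain reads
\begin{align*}
\norm[\ell^2]{\UU_1^{n+1}} + \norm[\ell^2]{\UU_2^{n+1}}
&= \norm[\ell^2]{\UU_1^{n***}} + \norm[\ell^2]{\UU_2^{n***}} \\
&\leq \norm[\ell^2]{\UU_1^{n**}} + \norm[\ell^2]{\UU_2^{n**}} \\
&= \norm[\ell^2]{\UU_1^{n}} + \norm[\ell^2]{\UU_2^{n}},
\end{align*}
where the first equality uses Lemma~\ref{lemma3}, the inequality uses Lemma~\ref{lemma2} (together with the Parseval-type identification of norms), and the last equality uses Lemma~\ref{lemma1}. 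Since all three individual lemmas already contain the analytic and Fourier-analytic work, the theorem itself reduces to this clean composition argument, with the norm reconciliation in the advection step being the only real bookkeeping step. The overall result is unconditional in $\timestepsize$ and $\meshsize$, reflecting the unitarity (respectively, non-expansivity modulo interpolation) of each substep.
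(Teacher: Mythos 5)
Your proof is correct and follows essentially the same route as the paper: chain Lemma~\ref{lemma1} (equality through potential and kinetic steps), Lemma~\ref{lemma2} (non-expansivity through advection), and Lemma~\ref{lemma3} (total-norm conservation through coupling), using the Parseval-type identification $\norm[\ell^2]{\UU_i} = \norm[L^2]{U_{I,i}}$ to bridge discrete and continuous norms across the advection step — an identification the paper also invokes, albeit tacitly.
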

\begin{proof}
Applying Lemma \ref{lemma1} and \ref{lemma2} sequentially we see that
\begin{equation*}
\| \UU_i^{n***} \|_{\ell^2} = \| U^{n***}_{I,i} \|_{L^2} \leq \| U^{n**}_{I,i} \|_{L^2} = \| \UU_i^{n**} \|_{\ell^2} = \| \UU_i^{n} \|_{\ell^2},
\end{equation*}
and thus
\begin{equation*}
\| \UU_1^{n***} \|_{\ell^2} + \| \UU_2^{n***} \|_{\ell^2} \leq \| \UU_1^{n} \|_{\ell^2} + \| \UU_2^{n} \|_{\ell^2}.
\end{equation*}
The conclusion then follows via Lemma \ref{lemma3}.
\end{proof}

\subsection{Error estimates}

In this section we study error estimates for the proposed method.
For this purpose we will make use of the following shorthands to avoid overly long and repeated summation notation:
\begin{align*}
\norm[\alpha]{\UU} &= \norm[\ell^2]{\UU_1} + \norm[\ell^2]{\UU_2},\\
\norm[A]{u} &= \norm[L^2]{u_1} + \norm[L^2]{u_2}.
\end{align*}
The main result of this section will be using the following assumptions, which are  analogues of the assumptions for the scalar Schrödinger-type equation in \cite{bjm2002, jin_semi-lagrangian_2013, mzz2017}: We assume the solutions and potentials are smooth and periodic on the spatial box.
With $m \geq 1$ let
\begin{align}\label{assumptions1}
\left\lVert{\frac{\partial^{m_1}}{\partial x^{m_1}} \frac{\partial^{m_2}}{\partial t^{m_2}}  u_i(x,t)}\right\rVert_{C([0,T],L^2)} &\leq \frac{\alpha_{m_1+m_2}}{\eps^{m_1+m_2}}, \\
\left\lVert{\frac{\partial^m}{\partial x^m} \mathbf{A}(x)}\right\rVert_{L^2} \leq \beta_m &,\label{assumptions3} \quad \left\lVert{\frac{\partial^m}{\partial x^m} \phi(x)}\right\rVert_{L^2} \leq \gamma_m,
\end{align}
where $m,m_1,m_2 \in \mathbb{N}$ and $m = m_1+m_2$,
while $\alpha_m, \beta_m$ and $\gamma_m$ are positive constants and $\eps$ is the (small) scaling parameter appearing in the scaled Pauli equation.
Wherever we write $u_i$ without specific index $i$ we mean to imply that the statement holds for both of the 2-spinor components individually and thus has an obvious extension to the $\alpha$ and $A$ norms defined above.

\begin{theorem}\label{convergencetheorem}
Denote the exact 2-spinor solution to the Pauli equation in~\eqref{epsilonpauli} for given parameter $\eps$ by $u^{\eps}(x,t) = \begin{pmatrix} u_1^{\eps}(x,t) \\ u_2^{\eps}(x,t) \end{pmatrix}$,
where
\begin{equation*}
u^{\eps}(x,t+\timestepsize)
= \mathrm{e}^{\timestepsize\Afrak + \timestepsize\Bfrak + \timestepsize\Cfrak + \timestepsize\Dfrak} u^{\eps}(x,t),
\end{equation*}
and its operator splitting numerical approximation at time $n\timestepsize$ by $U^{n}$, where
\begin{equation*}
U^{n+1}
= \mathrm{e}^{\timestepsize\Dfrak} \mathrm{e}^{\timestepsize\Cfrak} \mathrm{e}^{\timestepsize\Afrak} \mathrm{e}^{\timestepsize\Bfrak}  U^{n}.
\end{equation*}
We assume that the potentials and solution are smooth and periodic on the relevant spatial box,
that the characteristic equation in~\eqref{characteristic} in the advection step and the FFT steps may be solved with negligible error
along with the assumption statements listed in~\eqref{assumptions1}--\eqref{assumptions3}
and that $\abs{\meshsize} = O(\eps)$ and $\timestepsize = O(\eps)$.
Then, for any time $t \in [0, T]$ we have the error estimate
\begin{align*}
\| u^{\eps}(t_{n})-U_I^{n}\|_A &\leq \frac{C_1T}{\timestepsize}\left(\frac{\abs{\meshsize}}{\eps}\right)^m + \frac{C_2 T \timestepsize}{\eps},
\end{align*}
with $C_1, C_2$ being constants independent of $\timestepsize$, $\meshsize$, $T$, and $\eps$.
\end{theorem}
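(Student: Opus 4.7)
The plan is to follow the Lady Windermere's fan argument for splitting schemes used in~\cite{bjm2002, jin_semi-lagrangian_2013, mzz2017}, adapted to the four-operator setting of Algorithm~\ref{algorithm} and with explicit $\eps$-tracking throughout. Denote the exact flow by $\mathcal{S}(\timestepsize) := \mathrm{e}^{\timestepsize(\Afrak+\Bfrak+\Cfrak+\Dfrak)}$, the continuous split flow by $\mathcal{S}_h(\timestepsize) := \mathrm{e}^{\timestepsize\Dfrak}\mathrm{e}^{\timestepsize\Cfrak}\mathrm{e}^{\timestepsize\Afrak}\mathrm{e}^{\timestepsize\Bfrak}$, and the fully discrete step (including Fourier interpolation in the advection substep) by $\widetilde{\mathcal{S}}_h(\timestepsize)$. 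Inserting and subtracting intermediate states, the global error telescopes as
\[
u^\eps(t_n) - U_I^n = \sum_{k=0}^{n-1} \widetilde{\mathcal{S}}_h(\timestepsize)^{n-1-k}\bigl(\mathcal{L}_k + \mathcal{I}_k\bigr),
\]
where $\mathcal{L}_k := \bigl[\mathcal{S}(\timestepsize) - \mathcal{S}_h(\timestepsize)\bigr]u^\eps(t_k)$ is the local splitting error and $\mathcal{I}_k := \bigl[\mathcal{S}_h(\timestepsize) - \widetilde{\mathcal{S}}_h(\timestepsize)\bigr]u^\eps(t_k)$ is the local Fourier interpolation error. Theorem~\ref{stabilitytheorem} guarantees that each factor $\widetilde{\mathcal{S}}_h(\timestepsize)^j$ is non-expansive in the $\norm[\alpha]{\cdot}$-norm, so the triangle inequality reduces the analysis to summing $\norm[A]{\mathcal{L}_k}$ and $\norm[A]{\mathcal{I}_k}$ over the $N = T/\timestepsize$ time steps.

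For the local splitting error I would Taylor-expand both flows to second order in $\timestepsize$. The zeroth- and first-order terms coincide, so a direct computation (or an application of the first-order BCH formula to each adjacent pair $\mathrm{e}^{\timestepsize X}\mathrm{e}^{\timestepsize Y}$) yields the leading-order identity
\[
\mathcal{L}_k = \frac{\timestepsize^2}{2}\bigl\{[\Cfrak,\Dfrak] + [\Afrak,\Dfrak] + [\Bfrak,\Dfrak] + [\Afrak,\Cfrak] + [\Bfrak,\Cfrak] + [\Bfrak,\Afrak]\bigr\}u^\eps(t_k) + O(\timestepsize^3),
\]
with remainder controlled by triple operator products acting on $u^\eps$. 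The crucial observation is that every commutator, once evaluated on $u^\eps$, inherits at most one factor of $\eps^{-1}$: the operators $\Afrak,\Bfrak,\Cfrak,\Dfrak$ carry prefactors $\eps,\,\eps^{-1},\,1,\,1$ and contain two, zero, one, zero spatial derivatives respectively, while \eqref{assumptions1} assigns $\eps^{-m_1-m_2}$ to any mixed derivative of $u^\eps$. A short case-by-case check of the six pairs using \eqref{assumptions1}--\eqref{assumptions3} yields $\norm[A]{\mathcal{L}_k} \leq C\timestepsize^2/\eps$. Summing over $N = T/\timestepsize$ steps then produces the second term $C_2 T \timestepsize/\eps$ of the claimed bound.

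The interpolation error $\mathcal{I}_k$ enters only through substep~(iii), where $U^{n**}$ at the foot of the characteristic $z_\jj(t_n)$ is replaced by its discrete trigonometric interpolant. Standard spectral interpolation estimates for smooth periodic functions, as already invoked in Lemma~\ref{lemma2} and \cite[Theorem~1]{suli_spectral_1991}, combined with the regularity assumption \eqref{assumptions1}, give for every $m \in \mathbb{N}$ the bound $\norm[A]{\mathcal{I}_k} \leq C_m (\abs{\meshsize}/\eps)^m$; the assumption $\abs{\meshsize}=O(\eps)$ keeps the ratio controlled so that larger $m$ actually improves accuracy. Summing over the $N$ steps yields the first term $(C_1 T/\timestepsize)(\abs{\meshsize}/\eps)^m$, and combining the two contributions completes the estimate.

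I expect the main technical obstacle to be the $\eps$-bookkeeping in the commutator analysis. In particular $[\Afrak,\Cfrak]$ and $[\Afrak,\Dfrak]$ generate second-order differential operators multiplied by $\eps$, and one must verify that the $\eps$ coming from $\Afrak$ cancels exactly one of the $\eps^{-1}$ factors produced when two derivatives fall on $u^\eps$, with no spurious degradation arising from the cubic remainder in the Taylor expansion (which has to be bounded by triple products of the operators acting on derivatives of $u^\eps$ whose size is again controlled by \eqref{assumptions1}--\eqref{assumptions3}). The spinor coupling via $\Dfrak$ is comparatively easier: because $\Dfrak$ acts only by bounded multiplication and $\mathrm{e}^{\timestepsize\Dfrak}$ is unitary by the Pauli-matrix structure exploited in Lemma~\ref{lemma3}, its commutators with the other operators preserve the overall $\eps$-scaling, and the additive norm $\norm[A]{\cdot}$ allows the component-wise estimates to combine without loss.
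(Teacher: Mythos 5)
Your proposal follows essentially the same approach as the paper's proof: the BCH-based commutator analysis for the local splitting error with explicit $\eps$-tracking (showing the diagonal commutators contribute $O(\timestepsize^2/\eps)$ and the $\Dfrak$-coupling commutators only $O(\timestepsize^2)$), the spectral interpolation error bound of order $(\abs{\meshsize}/\eps)^m$, and the stability result of Theorem~\ref{stabilitytheorem} to propagate local errors into the global estimate over $N = T/\timestepsize$ steps. The only cosmetic difference is that you phrase the error accumulation directly as a Lady Windermere's fan telescoping sum $u^\eps(t_n) - U_I^n = \sum_{k=0}^{n-1}\widetilde{\mathcal{S}}_h(\timestepsize)^{n-1-k}(\mathcal{L}_k + \mathcal{I}_k)$, whereas the paper establishes a recursive inequality $\|u^\eps(t_{n+1})-U_I^{n+1}\|_A \le \|u^\eps(t_n)-U_I^n\|_A + O((\abs{\meshsize}/\eps)^m) + O(\timestepsize^2/\eps)$ via repeated triangle inequalities and a slightly finer separation of the numerical $\Afrak_N$ and $\Cfrak_N$ substeps; these are equivalent formulations of the same argument.
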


\begin{proof}
Similar to discussions in \cite[Theorem 4.1]{bjm2002}, \cite[Theorem 4]{jin_semi-lagrangian_2013} and \cite[Theorem 3.2]{mzz2017} for various cases of scalar Schrödinger-type equations, the local splitting error for the Pauli equation operator splitting method is also determined by the non-commutativity of the respective operators via the classical Baker– Campbell–Hausdorff formula.
The proof strategy thus begins with the computation of commutators $[\cdot,\cdot]$ for the operators
$\timestepsize\Afrak$, $\timestepsize\Bfrak$, $\timestepsize\Cfrak$ and $\timestepsize\Dfrak$ and then concludes via a triangle inequality estimation for the error and can thus be seen as a 2-spinor generalization of the above referenced theorems.\\
As the operators in question act on 2-spinors and have a block operator representation we make use of the observation that the commutators of such operators $\mathfrak{L},\mathfrak{M},\mathfrak{K}$ with form
\begin{align*}
\mathfrak{L} = \begin{pmatrix}L & 0\\0 & L\end{pmatrix},\quad
\mathfrak{M} = \begin{pmatrix}M_1 & 0\\0 & M_2\end{pmatrix},\quad
\mathfrak{K} = \begin{pmatrix}0 & K_1\\K_2 & 0\end{pmatrix}
\end{align*}
 satisfy
\begin{align*}
[\mathfrak{L},\mathfrak{M}]&=\begin{pmatrix}[L,M_1] & 0\\0 & [L,M_2]\end{pmatrix}, \quad [\mathfrak{L},\mathfrak{K}]=\begin{pmatrix}0 & [L,K_1]\\ [L,K_2] & 0\end{pmatrix}, \\
[\mathfrak{M},\mathfrak{K}]&=\begin{pmatrix}0 & M_1 K_1 - K_1 M_2\\ M_2K_2-K_2M_1 & 0\end{pmatrix}.
\end{align*}
The computation can thus be made easier by computing these for each of the relevant component operators of $\timestepsize\Afrak$, $\timestepsize\Bfrak$, $\timestepsize\Cfrak$ and $\timestepsize\Dfrak$.
Direct computation yields the following results for the non-coupling commutators:
\begin{align*}
[\timestepsize \mathcal{A}, \timestepsize \mathcal{B}_1] u_1
&=  \frac{(\timestepsize)^2}{2}\sum_{j=1}^3\partial_j^2 \left(\frac{1}{2}\abs{\AA}^2+\phi-\frac{\eps}{2}B_3 \right)u_1 \\
& \quad + (\timestepsize)^2 \sum_{j=1}^3 \partial_{j} \left(\frac{1}{2}\abs{\AA}^2+\phi-\frac{\eps}{2}B_3 \right)\partial_{j} u_1,\\
[\timestepsize\mathcal{A}, \timestepsize \mathcal{B}_2]u_2
&= \frac{(\timestepsize)^2}{2}\sum_{j=1}^3\partial_j^2 \left(\frac{1}{2}\abs{\AA}^2+\phi+\frac{\eps}{2}B_3 \right)u_2\\
& \quad + (\timestepsize)^2 \sum_{j=1}^3 \partial_{j} \left(\frac{1}{2}\abs{\AA}^2+\phi+\frac{\eps}{2}B_3 \right)\partial_{j} u_2,
\end{align*}
\begin{align*}
[\timestepsize \mathcal{A}, \timestepsize \mathcal{C}] u_i
&= \frac{i \eps (\timestepsize)^2}{2} \sum_{k=1}^3 \sum_{j=1}^3 \left( \partial_k^2 A_j \partial_j u_i +2 \partial_k A_j \partial_k \partial_j u_i \right),\\
[\timestepsize \mathcal{C}, \timestepsize \mathcal{B}_1] u_1
&= - \frac{i(\timestepsize)^2}{\eps} \sum_{j=1}^3 A_j \partial_j  \left(\frac{1}{2}\abs{\AA}^2+\phi-\frac{\eps}{2}B_3 \right)u_1,\\
[\timestepsize \mathcal{C}, \timestepsize \mathcal{B}_2] u_2
&= - \frac{i(\timestepsize)^2}{\eps} \sum_{j=1}^3 A_j \partial_j  \left(\frac{1}{2}\abs{\AA}^2+\phi+\frac{\eps}{2}B_3 \right)u_2.
\end{align*}
This covers the operators which are already present in the magnetic Schr\"odinger case.
The primary takeaway from this is that the worst case scenario for the error from these commutators is of order $O\left(\frac{(\timestepsize)^2}{\eps}\right)$, consistent with \cite{bjm2002, jin_semi-lagrangian_2013, mzz2017}. This even holds true for the $\mathfrak{B}$ operator which differs from the magnetic Schrödinger case. Direct computation of the components of the coupling step commutators yields: 
\begin{align*}
[\timestepsize \mathcal{A}, \timestepsize \mathcal{D}_1] u_2
&=  \frac{(\timestepsize)^2}{2}\sum_{j=1}^3\partial_j^2  \left(-\frac{\eps}{2}B_1 + \frac{i \eps}{2} B_2 \right)u_1 \\
& \quad + (\timestepsize)^2 \sum_{j=1}^3 \partial_{j} \left(-\frac{\eps}{2}B_1 + \frac{i \eps}{2} B_2 \right)\partial_{j} u_2,\\
[\timestepsize \mathcal{A}, \timestepsize \mathcal{D}_2] u_1
&=  \frac{(\timestepsize)^2}{2}\sum_{j=1}^3\partial_j^2  \left(-\frac{\eps}{2}B_1 - \frac{i \eps}{2} B_2 \right)u_1 \\
& \quad + (\timestepsize)^2 \sum_{j=1}^3 \partial_{j} \left(-\frac{\eps}{2}B_1 - \frac{i \eps}{2} B_2 \right)\partial_{j} u_1,\\
[\timestepsize \mathcal{C}, \timestepsize \mathcal{D}_1] u_2
&= (\timestepsize)^2 \sum_{j=1}^3 A_j \partial_j \left(\frac{i}{2} B_1 + \frac{1}{2}B_2\right)u_2,\\
[\timestepsize \mathcal{C}, \timestepsize \mathcal{D}_2] u_1
&= (\timestepsize)^2 \sum_{j=1}^3 A_j \partial_j \left(\frac{i}{2}B_1 - \frac{1}{2} B_2\right)u_1,\\
\timestepsize^2 (\mathcal{B}_1 \mathcal{D}_1-\mathcal{D}_1 \mathcal{B}_2)u_2
&= \frac{(\timestepsize)^2}{2} (-B_1 B_3 + i B_2 B_3),\\
\timestepsize^2 (\mathcal{B}_2 \mathcal{D}_2-\mathcal{D}_2 \mathcal{B}_1)u_1
&= \frac{(\timestepsize)^2}{2} (B_1 B_3 + i B_2 B_3).
\end{align*}
All of these terms are $O\left((\timestepsize)^2\right)$.
As $\timestepsize = O (\eps)$ this means the coupling step commutators contribute less to the error than the previously mentioned worst case scenario.
Combining these results for all of the commutators, one finds that the local splitting error satisfies
\begin{equation*}
\| u^{\eps}(t_{n+1})-\tilde{u}(t_{n+1})\|_A = O\left(\frac{\timestepsize^2}{\eps}\right),
\end{equation*}
where $\tilde{u}(t_{n+1})$ is the pre-discretization operator splitting solution satisfying
\begin{equation*}
\tilde{u}(t_{n+1}) = \mathrm{e}^{\timestepsize\Dfrak} \mathrm{e}^{\timestepsize\Cfrak} \mathrm{e}^{\timestepsize\Afrak} \mathrm{e}^{\timestepsize\Bfrak} u(t_n).
\end{equation*}
Due to the nature of the coupling step,
the errors in the two spin components are not in general separable.
We proceed via the triangle inequality as follows:
\begin{align*}
\| u^{\eps}(t_{n+1})-U_I^{n+1}\|_A
&\leq \| u^{\eps}(t_{n+1})-\tilde{u}(t_{n+1})\|_A + \| \tilde{u}(t_{n+1}) - \tilde{u}_I(t_{n+1})\|_A\\
&+ \| \tilde{u}_I(t_{n+1})- U_I^{n+1}\|_A.
\end{align*}
The first term on the right hand side was already shown above to be of order $O\left(\frac{\timestepsize^2}{\eps}\right)$, while the second term is the error of the used interpolation method which as discussed in \cite{bjm2002,jin_semi-lagrangian_2013} and \cite[Theorem 3]{pasciak_spectral_1980} is $O\left(\left(\frac{\abs{\meshsize}}{\eps}\right)^m\right)$ under the assumption in \eqref{assumptions1},
where $m$ is any positive integer.
The final term in need of investigation is thus $\| \tilde{u}(t_{n+1})- U_I^{n+1}\|_A$
which corresponds to the error incurred due to the discretization.
Noting that $\|f_I\|_{L^2} = \| \mathbf{f} \|_{\ell^2}$, we obtain
\begin{align*}
\| \tilde{u}_I(t_{n+1})- U_I^{n+1}\|_A
&= \| \tilde{\mathbf{u}}(t_{n+1})- \mathbf{U}^{n+1} \|_\alpha\\
&=\|\mathrm{e}^{\timestepsize\Dfrak} \mathrm{e}^{\timestepsize\Cfrak} \mathrm{e}^{\timestepsize\Afrak} \mathrm{e}^{\timestepsize\Bfrak}  \mathbf{u}(t_{n}) - \mathrm{e}^{\timestepsize\Dfrak} \mathrm{e}^{\timestepsize\Cfrak_N} \mathrm{e}^{\timestepsize\Afrak_N} \mathrm{e}^{\timestepsize\Bfrak}  \mathbf{U}^{n} \|_\alpha,
\end{align*}
where $\tilde{\mathbf{u}}$ and $\mathbf{u}$ denote the vectors collecting the gridpoint values
of $\tilde{u}$ and $u$, respectively.
As the potential and coupling steps are solved analytically the operators remain unaffected on the right hand side but for the kinetic and advection steps we must distinguish
their numerical approximations $\mathfrak{A}_N$ and $\mathfrak{C}_N$.
A further application of the triangle inequality yields
\begin{align*}
\| \tilde{u}_I(t_{n+1})- U_I^{n+1}\|_A &\leq \|\mathrm{e}^{\timestepsize\Dfrak} \mathrm{e}^{\timestepsize\Cfrak} \mathrm{e}^{\timestepsize\Afrak} \mathrm{e}^{\timestepsize\Bfrak}  \mathbf{u}(t_{n}) - \mathrm{e}^{\timestepsize\Dfrak} \mathrm{e}^{\timestepsize\Cfrak} \mathrm{e}^{\timestepsize\Afrak_N} \mathrm{e}^{\timestepsize\Bfrak}  \mathbf{u}(t_{n}) \|_\alpha\\
&+ \|\mathrm{e}^{\timestepsize\Dfrak} \mathrm{e}^{\timestepsize\Cfrak} \mathrm{e}^{\timestepsize\Afrak_N} \mathrm{e}^{\timestepsize\Bfrak}  \mathbf{u}(t_{n}) - \mathrm{e}^{\timestepsize\Dfrak} \mathrm{e}^{\timestepsize\Cfrak_N} \mathrm{e}^{\timestepsize\Afrak_N} \mathrm{e}^{\timestepsize\Bfrak}  \mathbf{u}(t_{n})\|_\alpha\\
&+ \|\mathrm{e}^{\timestepsize\Dfrak} \mathrm{e}^{\timestepsize\Cfrak_N} \mathrm{e}^{\timestepsize\Afrak_N} \mathrm{e}^{\timestepsize\Bfrak} \mathbf{u}(t_{n}) - \mathrm{e}^{\timestepsize\Dfrak} \mathrm{e}^{\timestepsize\Cfrak_N} \mathrm{e}^{\timestepsize\Afrak_N} \mathrm{e}^{\timestepsize\Bfrak}  \mathbf{U}^{n} \|_\alpha.
\end{align*}
The first term in the above is just a measure of the spectral approximation error again and is thus $O\left(\left(\frac{\abs{\meshsize}}{\eps}\right)^m\right)$ as described above. The same is true for the second term since if errors due to the computation of the backwards grid step are negligible then this step is just a measure for the interpolation accuracy.
For the final term we note that since the operators $\mathrm{e}^{\timestepsize\Dfrak}, \mathrm{e}^{\timestepsize\Cfrak}, \mathrm{e}^{\timestepsize\Afrak}$ and $\mathrm{e}^{\timestepsize\Bfrak}$ are all unitary, the lemmas leading up to Theorem \ref{stabilitytheorem} in particular also imply that
\begin{align*}
\|\mathrm{e}^{\timestepsize\Cfrak_N}\|_A \leq 1, \quad \|\mathrm{e}^{\timestepsize\Afrak_N}\|_A = 1.
\end{align*}
Using the stability results above, cf. \cite[Equation~(3.50)]{mzz2017}, then yields
\begin{align*}
\|\mathrm{e}^{\timestepsize\Dfrak} \mathrm{e}^{\timestepsize\Cfrak_N} \mathrm{e}^{\timestepsize\Afrak_N} \mathrm{e}^{\timestepsize\Bfrak} \mathbf{u}(t_{n}) - \mathrm{e}^{\timestepsize\Dfrak} \mathrm{e}^{\timestepsize\Cfrak_N} \mathrm{e}^{\timestepsize\Afrak_N} \mathrm{e}^{\timestepsize\Bfrak}  \mathbf{U}^{n} \|_\alpha \leq \| u^{\eps}(t_n) -  U_I^{n}\|_A.
\end{align*}
Analogous to \cite[equation~(3.52)]{mzz2017} the discussion so far yields a recursive relationship for error accumulation
\begin{equation*}
\| u^{\eps}(t_{n+1})-U_I^{n+1}\|_A
\leq \| u^{\eps}(t_{n})-U_I^{n}\|_A + C_1\left(\frac{\abs{\meshsize}}{\eps}\right)^m
+ C_2\left(\frac{\timestepsize ^2}{\eps}\right).
\end{equation*}
which on the solution interval $t \in [0,T]$ implies that
\begin{equation*}
\| u^{\eps}(t_{n})-U_I^{n}\|_A
\leq \frac{C_1T}{\timestepsize}\left(\frac{\abs{\meshsize}}{\eps}\right)^m + \frac{C_2 T \timestepsize}{\eps},
\end{equation*}
for some constants $C_1$ and $C_2$ independent of $\timestepsize$, $\meshsize$, $T$, and $\eps$.
\end{proof}

The above theorem is fully consistent with the view of the Pauli equation as a bottom-up generalization of the scalar magnetic Schrödinger equation as it yields analogous error bounds despite the inclusions of the coupling step as well as its inherent three-dimensional nature.
Furthermore we can use the above result to define a meshing strategy for a desired accuracy (as done for the magnetic Schr\"odinger equation in~\cite{mzz2017} and~\cite{jin_semi-lagrangian_2013}):
If $\delta >0$ is a desired  error bound so that  $\| u^{\eps}(t_{n})-U_I^{\eps,n}\|_\beta \leq \delta$, then one should choose $\timestepsize$ and $\meshsize$ to satisfy
\begin{align*}
\frac{\timestepsize}{\eps} = O(\delta), \quad
\left(\frac{\abs{\meshsize}}{\eps}\right)^m = O(\delta \timestepsize).
\end{align*}
\begin{remark}
We note that for solutions and fields with sufficient regularity, the second term of the error bound in Theorem \ref{convergencetheorem} dominates the error and one can thus expect approximately linear convergence in $\Delta t$. Higher order in $\Delta t$ methods can be derived in the straightforward way by replacing the Lie splitting with higher order Strang splitting schemes.
\end{remark}
\section{Numerical experiments} \label{sec:numerics}
%%%%%%%%%%%%%%%%%%%%

We present proof-of-concept numerical results obtained from an implementation of the proposed method as a first order Lie splitting scheme. Higher precision may be obtained than is illustrated in this section by decreasing the stepsize in space and time, as well as using a second or higher order Strang splitting, cf. \cite{thalhammer_high-order_2008,AuzKT2015}. The computations presented in this section have been performed with an implementation of the above method
in the Julia programming language~\cite{beks2017}. We consider two cases with different spin coupling behavior and set $\Omega = [0,10]^3$, $\Delta x = 0.4$ and $\eps=0.5$ for both numerical experiments.
\subsection{Decoupled spin state dynamics}
We seek numerical solutions of the Pauli equation~\eqref{eq:pauli} using the following constant-in-time fields,
which are periodic on $\Omega$:
\begin{align} 
&\AA(x) = \pi
\begin{pmatrix}
-\cos\left(\frac{\pi}{5}(x_2-5)\right)\sin\left(\frac{\pi}{5}(x_2-5)\right)\\
\cos\left(\frac{\pi}{5}(x_1-5)\right)\sin\left(\frac{\pi}{5}(x_1-5)\right)\\
0
\end{pmatrix},\label{eq:experiment1afield}\\
\quad
&\BB(x)
= \frac{\pi}{5} 
\begin{pmatrix}
0 \\
0 \\
\sum_{j=1}^{2}\left( \pi \cos\left(\frac{\pi}{5}(x_j-5) \right)^2- \pi \sin\left(\frac{\pi}{5}(x_j-5) \right)^2\right) \end{pmatrix},\\
&\phi (x) = 0.
\end{align}
It is easily confirmed that these fields satisfy $\BB = \nabla \times \AA$
as well as the Coulomb gauge $\nabla \cdot \AA = 0$. We initialize the state
\begin{equation} \label{eq:initialstateup}
u^0
=
\begin{pmatrix}
\mathrm{e}^{-(x_1-4.5)^2-(x_2-4.5)^2-(x_3-5)^2}\\
\mathrm{e}^{-(x_1-5.5)^2-(x_2-5.5)^2-(x_3-5)^2}
\end{pmatrix}.
\end{equation}
As the $\BB$ field lacks an $x$-component, there is no coupling between spin up and down components and the spin components evolve fully independently indefinitely. In the absence of analytic solutions we can compare the obtained solutions with a more precise numerical solution to approximately visualize the method's convergence properties. Figure \ref{fig:errors1} shows the maximal absolute and relative errors, that is
\begin{align*}
Err_{abs}(u,T) = \max_{k_1,k_2,k_3} |(u^N_{k_1,k_2,k_3} - \tilde{u}^N_{k_1,k_2,k_3}|,\\
Err_{rel}(u,T) = \max_{k_1,k_2,k_3}  \frac{|(u^N_{k_1,k_2,k_3} - \tilde{u}^N_{k_1,k_2,k_3}|}{|\tilde{u}^N_{k_1,k_2,k_3}|},
\end{align*}
compared to a high precision numerical approximation $\tilde{u}$. Convergence to the approximation is approximately linear as expected of a first order Lie splitting approach with sufficiently small $\Delta x$.
%%%%%%%%%%%%%%%%%%%%
% FIGURE
%%%%%%%%%%%%%%%%%%%%
  \begin{figure}
    \centering
        \subfloat[]
    {{  \includegraphics[width=6.7cm]{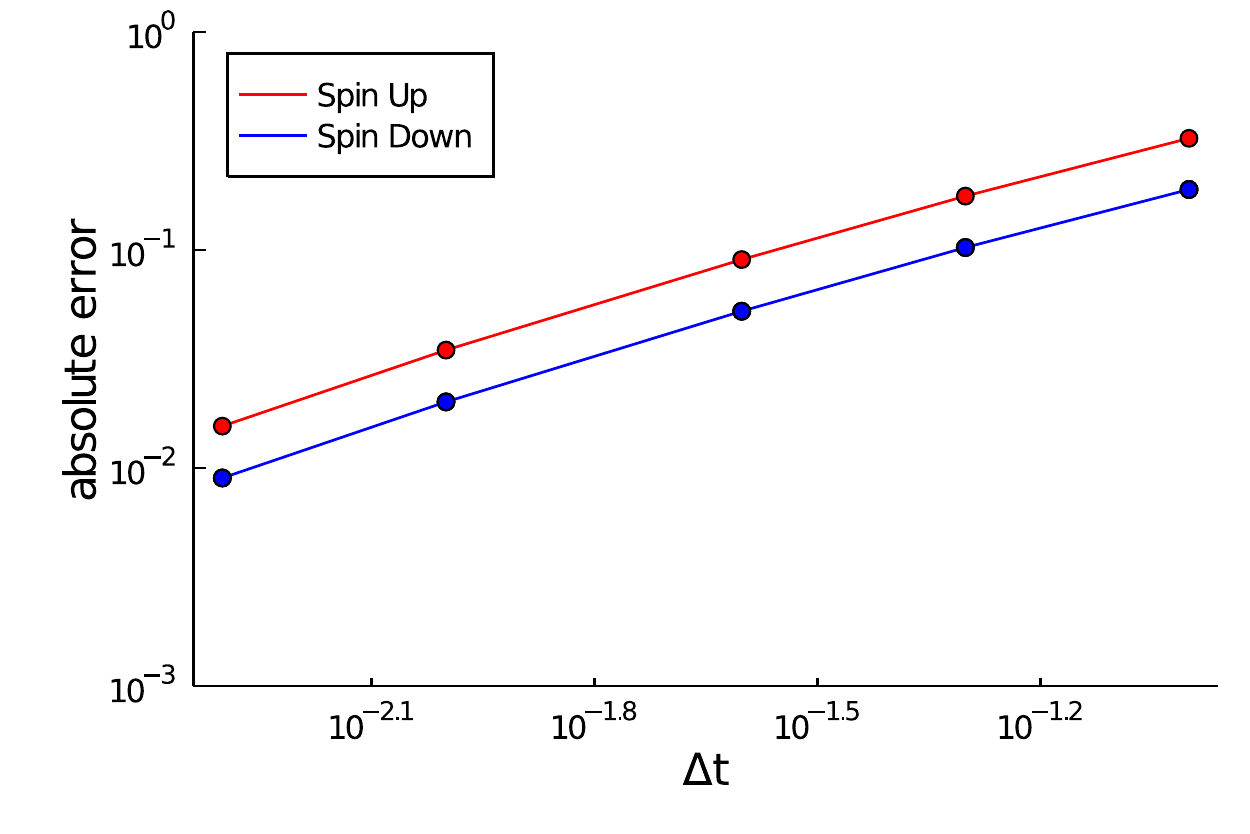} }}
   \qquad
         \subfloat[]
    {{  \includegraphics[width=6.7cm]{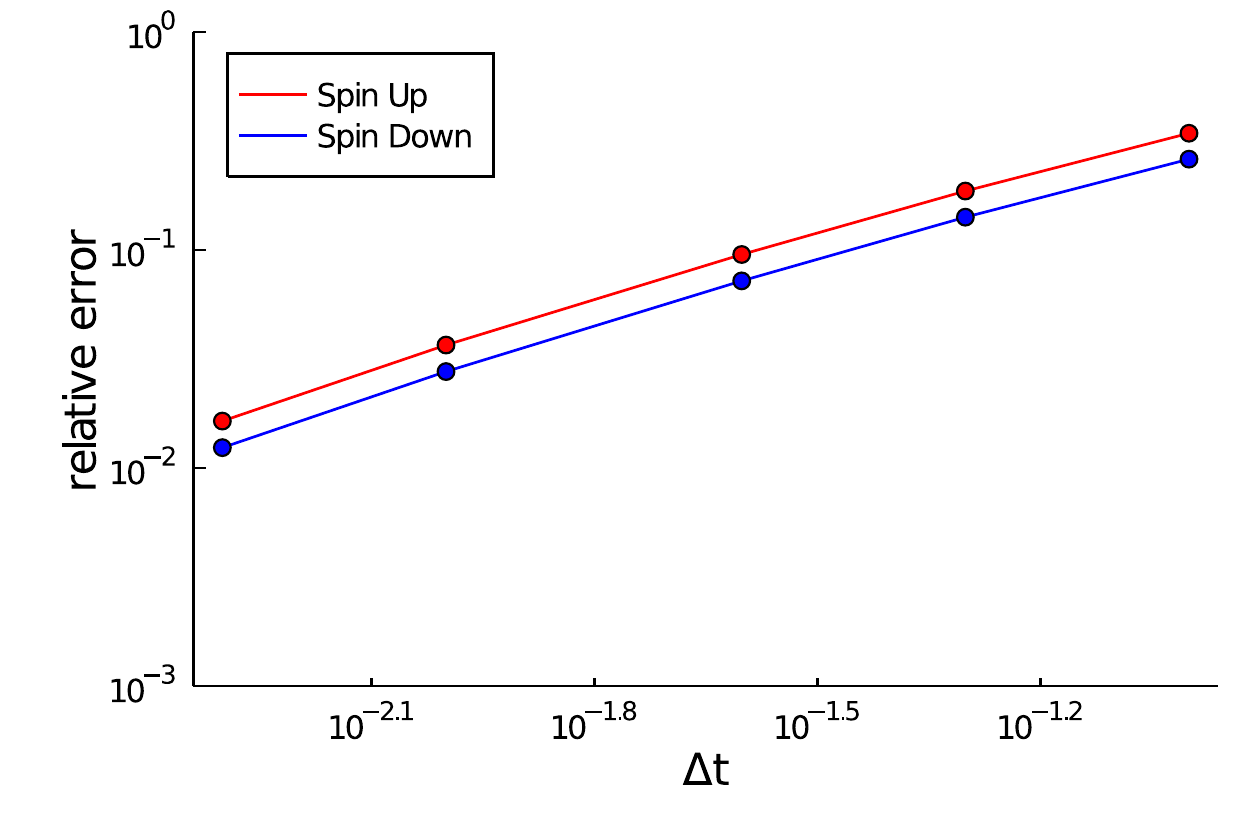} }}
    \caption{Absolute and relative errors of different time discretizations for the decoupled spin dynamics numerical experiment compared to a higher precision numerical solution. We observe approximately linear error convergence in time, as expected from a first order Lie splitting method.}%
    \label{fig:errors1}%
\end{figure}
%%%%%%%%%%%%%%%%%%%%
\subsection{Coupled spin state dynamics}
We use the following modified field setup
to observe more complex Pauli equation phenomena:
\begin{align} 
&\AA(x) = \pi
\begin{pmatrix}
-\cos\left(\frac{\pi}{5}(x_2-5)\right)\sin\left(\frac{\pi}{5}(x_2-5)\right)\\
\cos\left(\frac{\pi}{5}(x_1-5)\right)\sin\left(\frac{\pi}{5}(x_1-5)\right)\\
\frac{1}{\pi} \cos\left(\frac{\pi}{5}(x_1-5)\right)\sin\left(\frac{\pi}{5}(x_2-5)\right)
\end{pmatrix},\label{eq:experiment2afield}\\
\quad
&\BB(x)
= \frac{\pi}{5} 
\begin{pmatrix}
\cos\left(\frac{\pi}{5}(x_1-5)\right)\cos\left(\frac{\pi}{5}(x_2-5)\right) \\
\sin\left(\frac{\pi}{5}(x_1-5)\right)\sin\left(\frac{\pi}{5}(x_2-5)\right) \\
\sum_{j=1}^{2} \left( \pi\cos\left(\frac{\pi}{5}(x_j-5) \right)^2-  \pi\sin\left(\frac{\pi}{5}(x_j-5) \right)^2\right) \end{pmatrix},\\
&\phi (x) = 0.
\end{align}
We initialize with an exclusively spin up state:
\begin{equation} \label{eq:initialstateup2}
u^0
=
\begin{pmatrix}
\mathrm{e}^{-(x_1-4.5)^2-(x_2-4.5)^2-(x_3-5)^2}\\
0
\end{pmatrix}.
\end{equation}
Figure~\ref{fig:exp2solutions} shows the absolute value of the solution obtained for the initial state~\eqref{eq:initialstateup2}
at different times visualized as isosurfaces.
Due to the presence of a non-zero $x_1$-component in the $\BB$ field,
one observes coupling between spin up and down components. Figure \ref{fig:errors2} shows absolute and relative errors compared to a higher precision numerical approximation.
%%%%%%%%%%%%%%%%%%%%
% FIGURE
%%%%%%%%%%%%%%%%%%%%
 \begin{figure}
    \centering
    \subfloat[$t = 0.0 $]
    {{  \includegraphics[width=4cm]{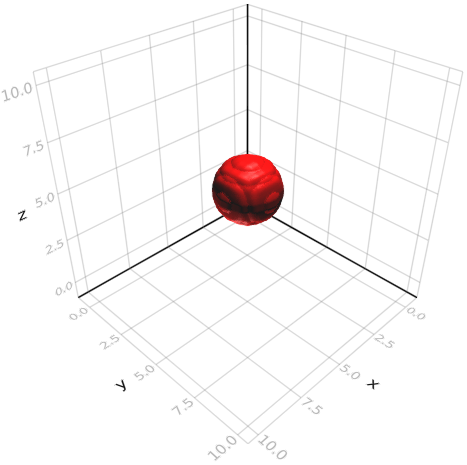} }}
   \qquad
      \subfloat[$t = 0.5 $]
    {{  \includegraphics[width=4cm]{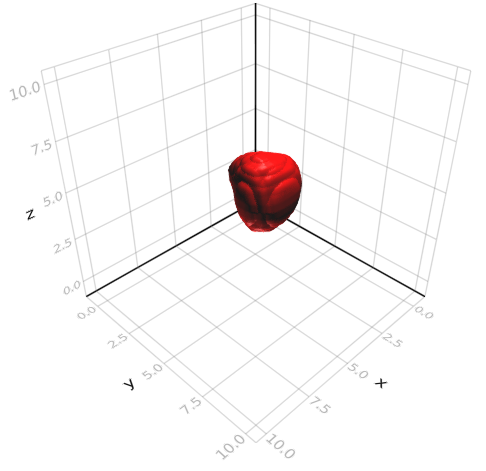} }}
               \qquad
      \subfloat[$t = 1.0 $]
    {{  \includegraphics[width=4cm]{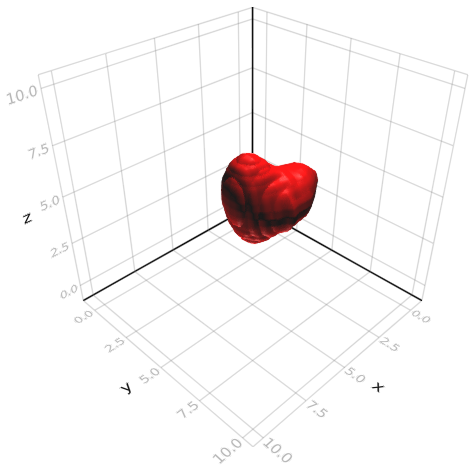} }}
    \qquad
        \subfloat[$t = 0.0 $]
    {{  \includegraphics[width=4cm]{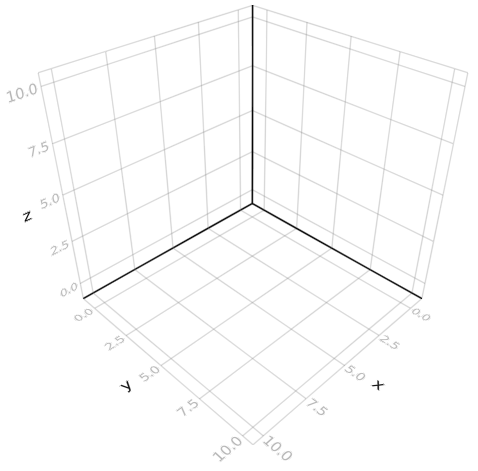} }}
   \qquad
      \subfloat[$t = 0.5 $]
    {{  \includegraphics[width=4cm]{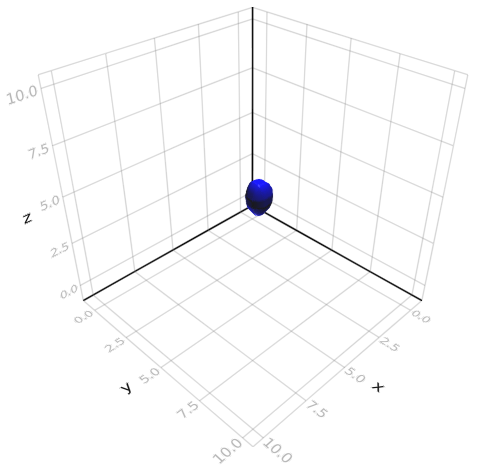} }}
               \qquad
      \subfloat[$t = 1.0 $]
    {{  \includegraphics[width=4cm]{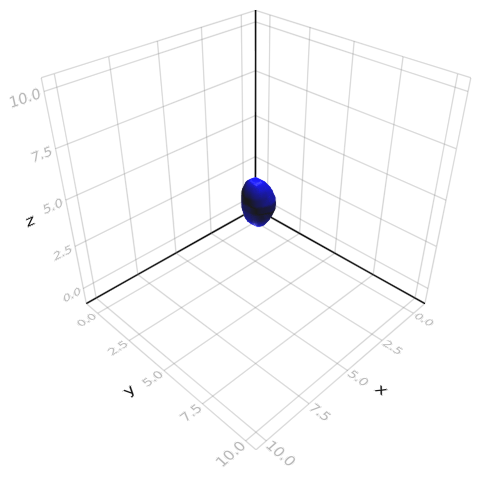} }}
    \caption{Isosurfaces of constant value $0.055$ for fields in~\eqref{eq:experiment2afield}
and initial state in~\eqref{eq:initialstateup2}.
Absolute value of spin up component displayed in red and absolute value of spin down component in blue. The spin up component gradually induces a spin down component in the same location and vice versa. The coupling is bidirectional.}%
    \label{fig:exp2solutions}%
\end{figure}
 \begin{figure}
    \centering
    \subfloat[]
    {{  \includegraphics[width=6.7cm]{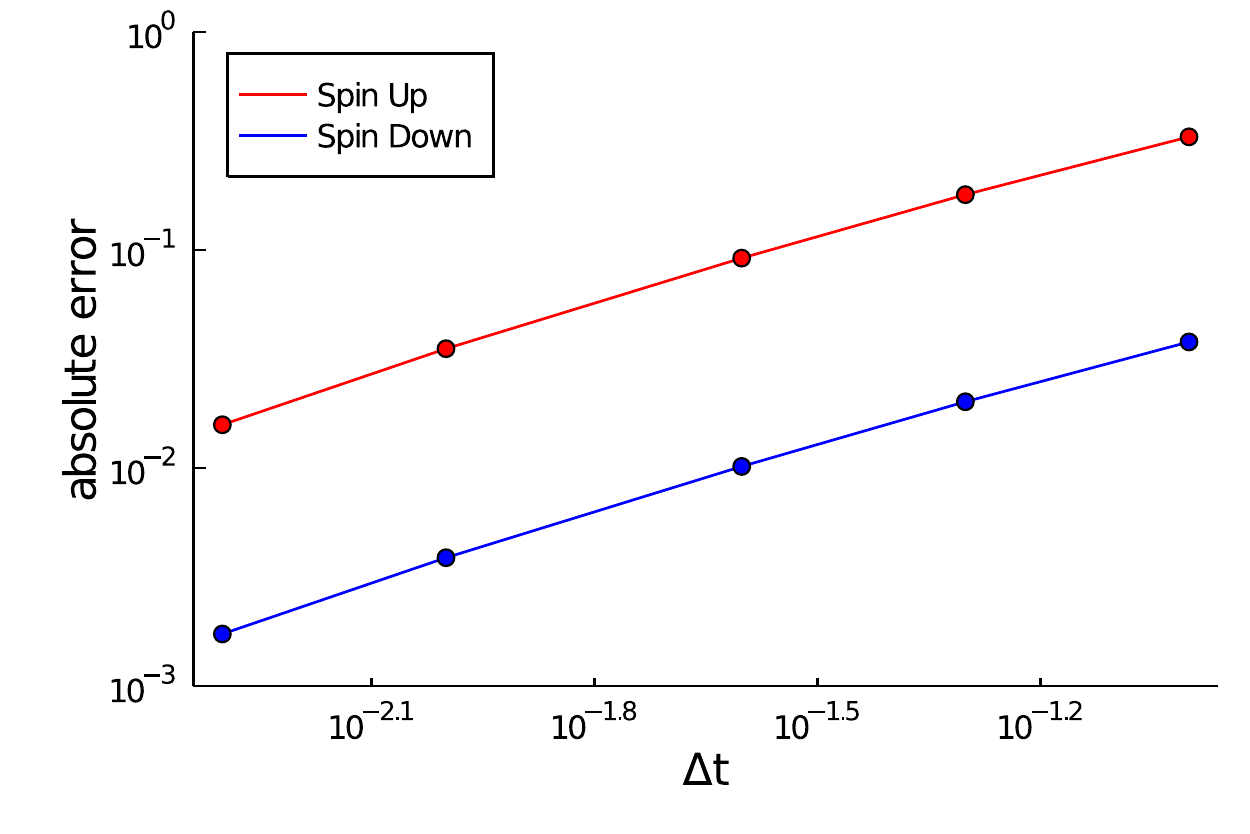} }}
   \qquad
      \subfloat[]
    {{  \includegraphics[width=6.7cm]{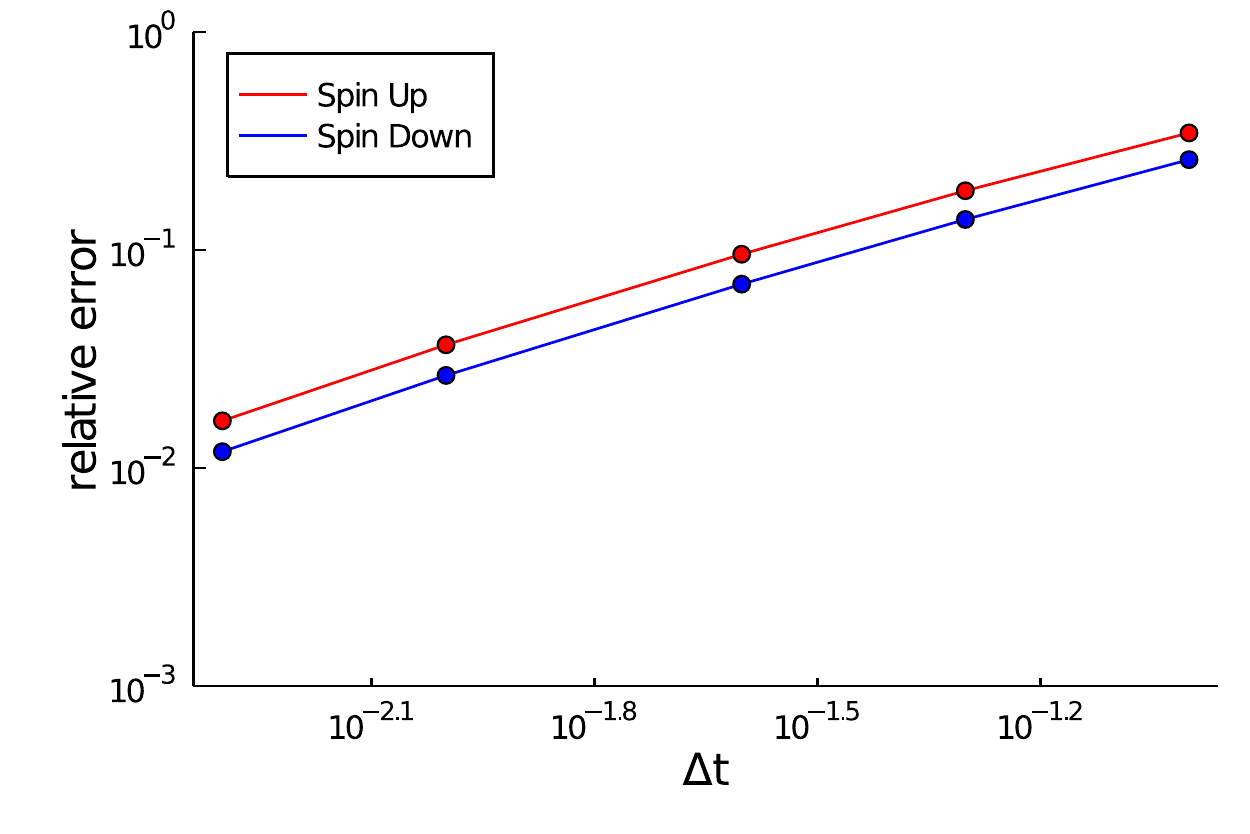} }}
    \caption{Absolute and relative errors of different time discretizations for the coupled spin system numerical experiment compared to a higher precision numerical solution. We observe approximately linear error convergence in time, as expected from a first order Lie splitting method.}%
    \label{fig:errors2}%
\end{figure}
%%%%%%%%%%%%%%%%%%%%
%%%%%%%%%%%%%%%%%%%%
\section{Conclusion} \label{sec:conclusion}
%%%%%%%%%%%%%%%%%%%%
We extended schemes for the scalar magnetic Schr\"odinger equation without spin term~\cite{jin_semi-lagrangian_2013,caliari_splitting_2017,mzz2017} to the Pauli equation by proposing a four-term operator splitting method. We analyzed the convergence of the scheme and presented proof of concept numerical experiments. The results are applicable to time-independent as well as simple time-dependent magnetic fields, but as of now are restricted to the linear case, i.e. explicitly given external magnetic vector and scalar electric potentials with or without time-dependence.

In the numerics of the Pauli equation, the coupled nature of the spin up and spin down state equations means that any error bounds can only be valid for the sum of the two states, as any errors can and will propagate between spin up and spin down state solutions in each step. Given this fact, it is remarkable that numerical error bounds obtained for the linear Pauli equation appear well-behaved under mild assumptions.

An important question for applications is the extension of this method to the fully self-consistent system consisting of the Pauli equation coupled to a suitable first order $O\left(\frac{1}{c}\right)$ approximation of the Maxwell equations. The canonical choice would be the so-called Pauli--Poiswell system~\cite{masmoudi_selfconsistent_2001}. This is part of ongoing research on numerical methods for nonlinear Pauli equations.

%%%%%%%%%%%%%%%%%%%%
\section*{Acknowledgments}
%%%%%%%%%%%%%%%%%%%%

We acknowledge support of the Austrian Science Fund (FWF) via the grants FWF DK W1245 and SFB F65, support from the Vienna Science and Technology Fund (WWTF) project MA16-066 "SEQUEX".

\section*{Appendix A}

%%%%%%%%%%%%%%%%%%%%
\subsection*{The potential step}
%%%%%%%%%%%%%%%%%%%%
Step~(i) of Algorithm~\ref{algorithm} consists in finding, for all grid points $x_{\jj}$,
the solution to the initial value problem
\begin{alignat*}{2}
\partial_s \begin{pmatrix}w_{\jj1}(s) \\ w_{\jj2}(s) \end{pmatrix}
& =
\begin{pmatrix}
\Bfrak_1(s) & 0\\
0 & \Bfrak_2(s)
\end{pmatrix}
\begin{pmatrix} w_{\jj1}(s) \\ w_{\jj2}(s) \end{pmatrix}
&\quad& \text{for } s \in (0,\timestepsize), \\
w_{\jj}(0) &= U^n(x_{\jj}), &&
\end{alignat*}
where $w_{\jj}=(w_{\jj1},w_{\jj2})$ and
\begin{align*}
\Bfrak_1(s) & = - \frac{i}{\eps}\left( \frac{1}{2}\abs{\AA(x_{\jj},t_n+s)}^2 + \phi(x_{\jj},t_n+s) - \frac{\eps}{2}B_3(x_{\jj},t_n+s) \right),\\
\Bfrak_2(s) & = - \frac{i}{\eps}\left( \frac{1}{2}\abs{\AA(x_{\jj},t_n+s)}^2 + \phi(x_{\jj},t_n+s) + \frac{\eps}{2}B_3(x_{\jj},t_n+s) \right).
\end{align*}
Then, the solution of the potential step is given by
$U^{n*}(x_{\jj}) = \mathrm{e}^{\timestepsize \Bfrak} U^n(x_{\jj}) := w_{\jj}(\timestepsize)$.
%\begin{alignat*}{2}
%\partial_t \begin{pmatrix}w_{1} \\ w_{2} \end{pmatrix}
%& =
%\begin{pmatrix}
%\Bfrak_1 & 0\\
%0 & \Bfrak_2
%\end{pmatrix}
%\begin{pmatrix} w_{1} \\ w_{2} \end{pmatrix}
%&\quad& \text{in } \Omega \times (t_n,t_{n+1}), \\
%w(t_n) &= U^n, &&
%\end{alignat*}
%where $w=(w_{1},w_{2})$ and
%\begin{align*}
%\mathcal{B}_1 & = - \frac{i}{\eps} \left( \frac{1}{2}\abs{\AA}^2 + \phi - \frac{\eps}{2}B_3 \right),\\
%\mathcal{B}_2 & = - \frac{i}{\eps} \left( \frac{1}{2}\abs{\AA}^2 + \phi + \frac{\eps}{2}B_3 \right).
%\end{align*}
%Then, the solution of the potential step is given by
%$U^{n*}(x_{\jj}) = \mathrm{e}^{\timestepsize \Bfrak} U^n(x_{\jj}) := w(t_{n+1})$.
For time-independent magnetic field and potentials, an analytical solution is available
for all time-steps outside of the solution loop,
whereas for time-dependent data the solution has to be re-computed in each time-step.
In the latter case the solution can be obtained with any highly efficient ODE solver.

%%%%%%%%%%%%%%%%%%%%
\subsection*{The kinetic step}
%%%%%%%%%%%%%%%%%%%%
In Step~(ii) of Algorithm~\ref{algorithm}, one has to solve the initial boundary value problem
\begin{alignat*}{2}
\partial_t \begin{pmatrix}w_1 \\ w_2 \end{pmatrix}
& =
\begin{pmatrix}\frac{i \eps}{2}\nabla^2 & 0\\0 & \frac{i \eps}{2}\nabla^2\end{pmatrix}
\begin{pmatrix}w_1 \\ w_2 \end{pmatrix}
&\quad& \text{in } \Omega \times (t_n,t_{n+1}), \\
w(t_n) &= U^{n*}, &&
\end{alignat*}
which consists of nothing but two decoupled free Schr\"odinger equations
with periodic boundary conditions for $w=(w_1,w_2)$.
Then, the solution of the kinetic step is given by $U^{n**} = \mathrm{e}^{\timestepsize \Afrak} U^{n*} := w(t_{n+1})$.
Hence, we can use any of the available highly efficient methods for the free Schr\"odinger equation.
In light of the advection step, a good way is to solve the equation in Fourier space using FFT.
In particular, as $U^{n**} = \mathrm{e}^{\timestepsize \Afrak} U^{n*}$, we find that
\begin{equation}\label{eq:kinetic_fourier}
\begin{split}
\widehat U^{n**}_{k_1,k_2,k_3}
&=\mathrm{e}^{- \frac{i \eps \timestepsize}{2} \sum_{\ell=1}^3 \left( \frac{2 \pi k_\ell}{L_\ell}\right)^2} \widehat U^{n*}_{k_1,k_2,k_3}\\
&=\mathrm{e}^{- \frac{i \eps \timestepsize}{2} \sum_{\ell=1}^3 \left( \frac{2 \pi k_\ell}{L_\ell}\right)^2}  \frac{1}{N_1N_2N_3} 
\sum_{j_1 =0}^{N_1-1}\sum_{j_2 =0}^{N_2-1}\sum_{j_3 =0}^{N_3-1}
U^{n*}_{j_1,j_2,j_3} \mathrm{e}^{-2\pi i \sum_{\ell=1}^3 \frac{j_\ell k_\ell} {N_\ell}}.
\end{split}
\end{equation}
Then, instead of performing an iFFT to move back to physical space we can directly pass
the Fourier space data to the next step.

%%%%%%%%%%%%%%%%%%%%
\subsection*{The advection step}
%%%%%%%%%%%%%%%%%%%%
This substep  is the most subtle step of the  operator splitting method,
as standard methods are usually stable only under restrictive CFL-type conditions 
that prevent the use of large time-step sizes.
However, since it is analogous to the magnetic Schr\"odinger equation case, we can adapt methods
in~\cite{jin_semi-lagrangian_2013,caliari_splitting_2017,mzz2017} for the 2-spinor case.
We opt for the method of characteristics to solve this equation combined with Fourier interpolation.
Step~(iii) of Algorithm~\ref{algorithm} consists of the solution of
\begin{alignat*}{2}
\partial_t \begin{pmatrix}w_1\\ w_2\end{pmatrix}
&=
\begin{pmatrix}
\AA \cdot \nabla & 0\\0 & \AA \cdot \nabla
\end{pmatrix}
\begin{pmatrix}
w_1\\w_2
\end{pmatrix}
&\quad& \text{in } \Omega \times (t_n,t_{n+1}), \\
w(t_n) &= U^{n**}. &&
\end{alignat*}
For each of the two components of $w=(w_1,w_2)$ and each $\jj$,
the characteristic  $z_{\jj}(\cdot)$ through $x_{\jj}$ solves the problem
\begin{align}
\label{characteristic}
\partial_t z_{\jj}(t) &= - \AA(z_{\jj}(t),t)
\quad \text{for } t \in (t_n,t_{n+1}), \\
z_{\jj}(t_{n+1}) &= x_{\jj}.
\end{align}
with end value prescribed at $t=t_{n+1}$.
Solving the above characteristic equation for each grid point $x_{\jj}$
would yield the sought approximation $U^{n***}(x_{\jj})$ via
\begin{equation*}
U^{n***}(x_{\jj})
= \mathrm{e}^{\timestepsize \Cfrak} U^{n**}(x_{\jj})
:= w(z_{\jj}(t_n),t_{n})
= U^{n**}(z_{\jj}(t_n)).
\end{equation*}
However, the point $z_{\jj}(t_n)$ is not a grid point in general,
so we do not have immediate access to the value $U^{n**}(z_{\jj}(t_n))$.
We need to use an interpolation method to approximate $U^{n**}(z_{\jj}(t_n))$
based on the knowledge of $U^{n**}$ at  grid points.
Since the previous step passes Fourier data to the advection step,
it is natural to use Fourier interpolation to accomplish this.
%which also eliminates the need to use another inverse Fourier transformation
%before handing the data to the coupling step.
Following~\cite[Section~5.1]{caliari_splitting_2017}, we evaluate a Fourier interpolation at $x = z_{\jj}(t_n)$,
where the coefficients $\{ \widehat U^{n**}_{k_1,k_2,k_3} \}$ are known
from step~(ii) of Algorithm~\ref{algorithm}. In general, further choices are required to make such a trigonometric interpolation unique in a sensible way (see, e.g. \cite{johnson_2011}) but we omit discussion of this here - minimally oscillatory interpolations are usually to be preferred.
Besides this uniform trigonometric method, one could employ other methods for the interpolation,
e.g. the computationally more efficient non-uniform NUFFT-based approaches
as in \cite[Section~5.3]{caliari_splitting_2017} and~\cite[Section~2.2]{mzz2017}.

%%%%%%%%%%%%%%%%%%%%
\subsection*{The coupling step}
%%%%%%%%%%%%%%%%%%%%
The coupling step contains the off-diagonal components of the Pauli equation.
Step~(iv) of Algorithm~\ref{algorithm} consists in finding,
for all grid points $x_{\jj}$,
the solution
of the following initial value problem:
\begin{alignat*}{2}
\partial_s \begin{pmatrix}w_{\jj 1}(s) \\ w_{\jj 2}(s) \end{pmatrix}
& =
\begin{pmatrix}
0 & \Dfrak_1(s) \\
\Dfrak_2(s) & 0
\end{pmatrix}
\begin{pmatrix}w_{\jj 1}(s) \\ w_{\jj 2}(s) \end{pmatrix}
&\quad& \text{for } s \in (0,\timestepsize), \\
w_{\jj}(0) &= U^{n***}(x_{\jj}), &&
\end{alignat*}
where $w_{\jj}=(w_{\jj 1},w_{\jj 2})$ and
\begin{align*}
\Dfrak_1(s) & = \frac{i}{2} B_1(x_{\jj},t_n+s) + \frac{1}{2} B_2(x_{\jj},t_n+s),\\
\Dfrak_2(s) & = \frac{i}{2}B_1(x_{\jj},t_n+s) - \frac{1}{2} B_2(x_{\jj},t_n+s).
\end{align*}
Then, the solution of the coupling step,
which is also the approximation $U^{n+1} \approx u(t_{n+1})$,
is given by $U^{n+1}(x_{\jj}) = \mathrm{e}^{\timestepsize \Dfrak} U^{n***}(x_{\jj}) := w_{\jj}(\timestepsize)$.
Unlike the previous steps, this is a \emph{coupled} system of ODEs,
which may be treated with appropriate highly efficient solvers. An analytic solution to this ODE is readily available in each time step, and as with the potential step (step~(i) of Algorithm~\ref{algorithm}),
for the case of time-independent potentials the solution operator may in fact be pre-computed for all considered time-steps outside of the solution loop.

\end{document}